%
%

%
%

\documentclass[smallcondensed]{svjour3-id}

\usepackage{latexsym, amsfonts, amsmath, amssymb,verbatim,amscd,tikz}
\usepackage{authblk}

\hoffset=-1cm

%
%
\let\ds=\displaystyle

\newcommand{\BV}{{BV}}
\newcommand{\LG}{{LG}}

\newcommand{\CTPP}{\hbox{$CT\kern-0.2ex{P}\kern-0.2ex{P}$}}

\newcommand{\calA}{\mathcal{A}}
\newcommand{\calB}{\mathcal{B}}
\newcommand{\calP}{\mathcal{P}}
\newcommand{\calC}{\mathcal{C}}
\newcommand{\calM}{\mathcal{M}}

\newcommand{\mR}{\mathbb{R}}
\newcommand{\mC}{\mathbb{C}}

\newcommand{\mT}{\mathbb{T}}

\newcommand{\vecc}{{\boldsymbol{c}}}
\newcommand{\vecm}{{\boldsymbol{m}}}

\newcommand{\vecx}{{\boldsymbol{x}}}
\newcommand{\vecy}{{\boldsymbol{y}}}
\newcommand{\vecu}{{\boldsymbol{u}}}
\newcommand{\vecv}{{\boldsymbol{v}}}
\newcommand{\vecw}{{\boldsymbol{w}}}
\newcommand{\vecz}{{\boldsymbol{z}}}

\newcommand{\abs}[1]{\left\lvert#1\right\rvert}
\def\ls[#1,#2]{\overline{#1\,#2}}

\newcommand{\norm}[1]{\left\lVert#1\right\rVert}
\newcommand{\snorm}[1]{\bigl\lVert#1\bigr\rVert}

\newcommand{\normbv}[1]{\left\lVert#1\right\rVert_{\BV(\sigma)}}

\def\ipr<#1,#2>{\langle #1,#2\rangle}
\def\ls[#1,#2]{\overline{\vphantom{\vbox to 1.2 ex{}} #1\, #2}}

\newcommand{\st}{\,:\,}
\renewcommand{\Re}{\mathop{\mathrm{Re}}}

\newcommand{\PIC}{\mathrm{PIC}}
\newcommand{\UPIC}{\mathrm{UPIC}}

\DeclareMathOperator*{\var}{var}
\DeclareMathOperator*{\cvar}{\rm cvar}
\DeclareMathOperator*{\pvar}{\rm pvar}
\DeclareMathOperator{\vf}{vf}

\newcommand{\journalref}[1]{\textrm{#1}}

%
%
\begin{document}
\title{$AC(\sigma)$ spaces for polygonally inscribed curves}

\author{Shaymaa Al-shakarchi \and Ian Doust}

\institute{School of Mathematics and Statistics, University of New South Wales, UNSW Sydney 2052, Australia, \email{i.doust@unsw.edu.au }}


\maketitle

\keywords{Functions of bounded variation, $AC(\sigma)$ operators, isomorphisms of function spaces}
\subclass{Primary 46J10; Secondary 05C10,
46J45, 47B40, 26B30}
%
%
\begin{abstract}
For certain families of compact subsets of the plane, the isomorphism class of the algebra of absolutely continuous functions on a set is completely determined by the homeomorphism class of the set. This is analogous to the Gelfand--Kolmogorov theorem for $C(K)$ spaces. In this paper we define a family of compact sets comprising finite unions of convex curves and show that this family has the `Gelfand--Kolmogorov' property.
\end{abstract}

%
%

\section{Introduction}\label{intro}
$AC(\sigma)$ operators were introduced by Ashton and Doust \cite{AD4} as a generalization of normal operators to the Banach spaces setting. A core ingredient in the definition of these operators is the family of Banach algebras of absolutely continuous functions defined on nonempty compact subsets of the plane. These spaces were defined in terms of a new concept of two dimensional variation which was designed to ensure that the set of $AC(\sigma)$ operators had appropriate spectral properties.

 If $\sigma$ is a nonempty compact subset of $\mC$, we shall denote the algebra of absolutely continuous functions $f: \sigma \to \mC$ by $AC(\sigma)$. (Full definitions will be given in Section~\ref{S:Defs}.)
 There have now been a number of papers studying the structure of these spaces. The main question addressed has been when two such spaces are isomorphic as Banach algebras. Recall that the Gelfand--Kolmogorov Theorem \cite{GK,GJ} states that two spaces $C(K_1)$ and $C(K_2)$ are isomorphic as Banach algebras if and only if $K_1$ and $K_2$ are homeomorphic. For the $AC(\sigma)$ spaces, isomorphic function algebras must always have homeomorphic domain sets, but the converse implication may fail. Nonetheless, for certain families of compacts subsets of the plane, one does obtain a Gelfand--Kolmogorov type theorem.

\begin{definition} Let $\Sigma$ be a family of nonempty compact subsets of $\mC$. We shall say that $\Sigma$ is a \textbf{Gelfand--Kolmogorov family} if, for $\sigma,\tau \in \Sigma$, $AC(\sigma)$ is isomorphic to $AC(\tau)$ if and only if $\sigma$ is homeomorphic to $\tau$.
\end{definition}

The main result of \cite{DL} is that the family of all compact polygonal regions with finitely many polygonal holes is a Gelfand--Kolmogorov family. More recently it was shown that the family $LG$ of sets which are the union of finitely many closed line segments is also a Gelfand--Kolmogorov family \cite{ASD}. On the other hand, the family of countable compact subsets of the plane is not a Gelfand--Kolmogorov family \cite{DAS}. Nor indeed is the family of all compact sets which are homeomorphic to the unit interval \cite[Example 7.1]{ASD}.

The aim of this paper is to extend the result of \cite{ASD} to a significantly larger family of compact sets which we denote $\PIC$. Roughly speaking the class $\PIC$ consists of connected compact subsets of the plane which are finite unions of smooth convex curves. Typical $\PIC$ sets are shown in Figure~\ref{PIC-sets}. For technical reasons, we need to introduce some mild additional conditions on the curves.

\begin{figure}[ht!]
\begin{center}
\begin{tikzpicture}

\draw[ultra thick, black] (-4,-1) -- (-4,1);
\draw[ultra thick, black] (-5,0) -- (-3,0);
\draw (-4.5,0.7) node {$\sigma_1$};

\draw[ultra thick, black] (1.554, 0.) -- (1.554, 0.09776) -- (1.554, .1962) -- (1.553, .2963) -- (1.551, .3982) -- (1.546, .5024) -- (1.538, .6088) -- (1.523, .7166) -- (1.497, .8229) -- (1.458, .9253) -- (1.401, 1.018) -- (1.323, 1.094) -- (1.225, 1.150) -- (1.109, 1.181) -- (.9835, 1.189) -- (.8564, 1.179) -- (.7335, 1.156) -- (.6206, 1.129) -- (.5178, 1.100) -- (.4252, 1.074) -- (.3414, 1.051) -- (.2651, 1.033) -- (.1945, 1.018) -- (.1273, 1.008) -- (0.06304, 1.002) -- (0, 1.000) -- (-0.06304, 1.002) -- (-.1273, 1.008) -- (-.1941, 1.018) -- (-.2651, 1.033) -- (-.3414, 1.051) -- (-.4252, 1.074) -- (-.5178, 1.100) -- (-.6206, 1.129) -- (-.7335, 1.156) -- (-.8564, 1.179) -- (-.9835, 1.189) -- (-1.109, 1.181) -- (-1.225, 1.150) -- (-1.323, 1.095) -- (-1.401, 1.018) -- (-1.458, .9255) -- (-1.497, .8227) -- (-1.523, .7163) -- (-1.538, .6083) -- (-1.546, .5031) -- (-1.551, .3985) -- (-1.553, .2964) -- (-1.554, .1962) -- (-1.554, 0.09739) -- (-1.554, -0.0006329) -- (-1.554, -0.9711e-1) -- (-1.554, -.1959) -- (-1.553, -.2961) -- (-1.551, -.3983) -- (-1.546, -.5029) -- (-1.538, -.6095) -- (-1.523, -.7159) -- (-1.497, -.8224) -- (-1.458, -.9253) -- (-1.401, -1.018) -- (-1.323, -1.095) -- (-1.225, -1.151) -- (-1.109, -1.180) -- (-.9835, -1.189) -- (-.8564, -1.179) -- (-.7335, -1.156) -- (-.6206, -1.129) -- (-.5178, -1.100) -- (-.4252, -1.074) -- (-.3414, -1.051) -- (-.2651, -1.033) -- (-.1941, -1.018) -- (-.1273, -1.008) -- (-0.6304e-1, -1.002) -- (0, -1.000) -- (0.06304, -1.002) -- (.1273, -1.008) -- (.1941, -1.018) -- (.2651, -1.032) -- (.3414, -1.051) -- (.4252, -1.074) -- (.5178, -1.100) -- (.6206, -1.129) -- (.7335, -1.156) -- (.8564, -1.178) -- (.9835, -1.188) -- (1.109, -1.181) -- (1.225, -1.150) -- (1.323, -1.095) -- (1.401, -1.018) -- (1.458, -.9250) -- (1.497, -.8221) -- (1.523, -.7171) -- (1.538, -.6092) -- (1.546, -.5024) -- (1.551, -.3980) -- (1.553, -.2958) -- (1.554, -.1956) -- (1.554, -0.09831) -- (1.554, -0.0002880);
\draw (-1.1,0.8) node {$\sigma_2$};

\draw[ultra thick,black] (4.5,-0.2) circle (0.8cm);
\draw[ultra thick,black] (4.5,-1) -- (4.5,0.6);
\draw[ultra thick,black] (3.7,-0.2) -- (5.3,-0.2);
\draw[ultra thick, black] (4.5,0.6) arc(20:80:0.7);
\draw[ultra thick, black] (4.5,0.6) arc(160:100:0.7);
\draw (5.5,0.8) node {$\sigma_3$};

\end{tikzpicture}
\end{center}
\caption{Three polygonally inscribed curves}\label{PIC-sets}

\end{figure}
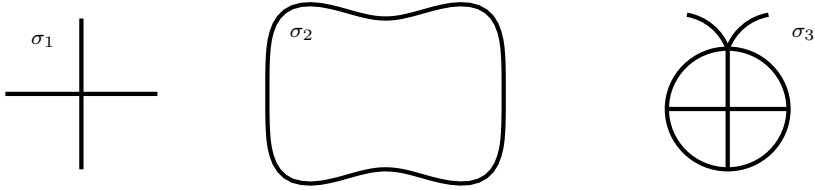

\begin{theorem}\label{MainThm} $\PIC$ is a Gelfand--Kolmogorov family.
\end{theorem}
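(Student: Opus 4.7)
The forward implication---that an algebra isomorphism $AC(\sigma)\cong AC(\tau)$ forces $\sigma$ and $\tau$ to be homeomorphic---holds for arbitrary compact $\sigma,\tau\subseteq\mC$, so the task is the converse: given homeomorphic $\sigma,\tau\in\PIC$, to produce a Banach-algebra isomorphism $AC(\sigma)\cong AC(\tau)$. My plan is to reduce the problem to the line-segment case already handled in \cite{ASD}. Concretely, I would construct, for each $\sigma\in\PIC$, an element $\hat\sigma\in\LG$ by replacing each constituent smooth convex arc of $\sigma$ by a polygonal path whose vertices are \emph{inscribed} on the arc (including at least the arc's endpoints and any points where it meets other arcs of $\sigma$). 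The combinatorial incidence data at the nodes is preserved by construction, so $\hat\sigma$ is homeomorphic to $\sigma$.

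The technical core is then the following Key Lemma: for every $\sigma\in\PIC$, if the vertices of the inscription are chosen densely enough on every arc, then $AC(\sigma)\cong AC(\hat\sigma)$ as Banach algebras. Granted this, the theorem is immediate: for homeomorphic $\sigma,\tau\in\PIC$, a compatible choice of inscriptions yields $\hat\sigma,\hat\tau\in\LG$ that are again homeomorphic, whence $AC(\hat\sigma)\cong AC(\hat\tau)$ by the main theorem of \cite{ASD}, and two applications of the Key Lemma give $AC(\sigma)\cong AC(\tau)$.

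To prove the Key Lemma I would first handle the base case of a single smooth convex arc $J$ joining points $p,q$, with inscription $\hat J$ a polygonal arc from $p$ to $q$. A natural ``sliding'' homeomorphism $h\colon J\to\hat J$ (which fixes the inscribed vertices and interpolates along an arc-length parameter between them) induces a candidate algebra map $\Phi(f)=f\circ h^{-1}$ between $AC(\hat J)$ and $AC(J)$. The main estimate needed is that $\Phi$ and its inverse are bounded in the two-dimensional variation norm. Here one exploits the key geometric fact, shared by convex arcs and straight line segments alike, that any straight line in $\mC$ meets the set in at most two points; this common intersection behaviour allows a chord-by-chord comparison of the 2D variations. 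For a general $\sigma\in\PIC$ one then localises near each node, using cutoff functions subordinate to small disks around the finitely many vertices together with a cover of each arc by simple subarcs, and assembles the local isomorphisms into a global one; compatibility at the nodes (values and one-sided behaviour along each incident arc) is preserved because the inscription fixes the nodes pointwise.

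The principal obstacle I anticipate is the two-sided boundedness of $\Phi$. The direction $AC(\hat J)\to AC(J)$ is the easier one: a convex arc has at least as much ``room'' as an inscribed polygon, and standard geometric comparisons control the variation. The harder direction is $AC(J)\to AC(\hat J)$, where a bend at a polygonal vertex can in principle amplify contributions to the 2D variation coming from lines nearly tangent to $J$. To control this one must refine the inscription so that the polygonal path stays on a single side of every supporting line of the arc at each non-inscribed point---possible by a compactness argument over the directions of supporting lines, which is precisely where the mild smoothness and convexity hypotheses built into the definition of $\PIC$ are used---so that the chord-counting estimates underlying the argument of \cite{ASD} can again be brought to bear.
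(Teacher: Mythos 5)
Your overall strategy---inscribe a polygonal path in each convex arc to produce $\hat\sigma\in\LG$ homeomorphic to $\sigma$, prove $AC(\sigma)\simeq AC(\hat\sigma)$, and then quote the $\LG$ theorem of \cite{ASD}---is genuinely different from the paper's route, which never passes through $\LG$ but instead introduces an equivalent ``$\PIC$ norm'' built from parametrized variations along the component curves, proves $AC(c)\simeq AC[0,1]$ for a single projectable convex curve by orthogonal projection onto its chord, and then establishes an $AC$ joining lemma to patch the components together. The reduction to $\LG$ is an attractive idea, and the single-arc case of your Key Lemma is true (indeed, both a projectable convex arc $J$ and an inscribed polygonal arc $\hat J$ have $AC$ spaces isomorphic to $AC[0,1]$, most cleanly seen by projecting \emph{both} onto the common chord $\ls[p,q]$ rather than via a sliding map; your worry about supporting lines is a non-issue, since an inscribed path automatically lies in the convex hull of the arc, and no density condition is needed).

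The genuine gap is the step where you ``assemble the local isomorphisms into a global one.'' For the two-dimensional variation used here this is not a routine localisation: $\var(f,\sigma)$ is computed over lists of points that hop arbitrarily among the components, and the quotient $\cvar(f,S)/\vf(S)$ is only controlled if one can bound $\vf(S)$ \emph{from below} by the number of inter-component hops. That lower bound is exactly what the polygonal mosaic is for (each hopping segment must cross a side of one of finitely many polygons), and it is the content of the paper's norm-equivalence lemma and of the $AC$ joining lemma in Sections~\ref{S:PIC-norm} and~\ref{S:AC-join}; the resulting constants degrade with the number of components (see Figure~\ref{bad-bv-ex}), so no soft compactness or cutoff argument can replace this counting. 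Your proposal asserts the global assembly without engaging with it, and the same issue already arises in verifying that restriction to the components characterises membership in $AC(\hat\sigma)$ versus $AC(\sigma)$ compatibly. A secondary, fixable point: you must also check that the inscribed paths create no new intersections (two nearby arcs of $\sigma$ could have crossing chords); this is where you would need to place each inscribed path inside the mosaic polygon containing its arc, so the mosaic cannot be dispensed with even in your approach.
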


Importantly, the unit circle $\mT$ is in $\PIC$. On reflexive Banach spaces $AC(\mT)$ operators are precisely the trigonometrically well-bounded operators introduced by Berkson and Gillespie \cite{BG}. Trigonometrically well-bounded operators have a well-developed structure theory, admitting an integral representation with respect to a suitable family of projections, and an extension of the functional calculus to all the functions of bounded variation on $\mT$. A consequence of the results of this paper, and the extension theorem proven in \cite{ASD3} is that if $T$ is an $AC(\sigma)$ operator on a reflexive Banach space and $\sigma \in \PIC$ is homeomorphic to the unit circle, then the $AC(\sigma)$ functional calculus for $T$ extends to a $BV(\sigma)$ functional calculus. It remains an open question as to whether this is true for all compact subsets $\sigma$ in the plane.

The proof of Theorem~\ref{MainThm} is structurally similar to the result for the family $\LG$. An important ingredient in both proofs is the ability to view $\PIC$ sets as the drawings of planar graphs. In Sections \ref{S:CC} to \ref{graph-isom} we introduce the family $\PIC$ and make connections to the parts of graph theory which we will use.

The main part of the proof of Theorem~\ref{MainThm} is to show that if $\sigma, \tau \in \PIC$ are homeomorphic, then  one can choose a homeomorphism $h: \sigma \to \tau$ such that $\Phi_h (f) = f \circ h^{-1}$ is an isomorphism of the corresponding spaces of absolutely continuous functions. The first main calculation is to show that $h$ can be chosen in a way which allows one to control the ratio of the variations of $f$ and $\Phi_h(f)$. This is done in Section~\ref{S:PIC-norm} by using the structure of the $\PIC$ sets to define a new norm which is equivalent to the original $BV$ norm.
 The final stage  in the proof (Sections \ref{S:AC-functions} to \ref{S:AC-join}) is to show that one may choose the algebra isomorphism between these spaces in such a way that it preserves the subalgebras of absolutely continuous functions, and consequently that $AC(\sigma)$ is isomorphic to $AC(\tau)$.
In the last section we shall make the minor extension to cover the case where the sets need not be connected.

Throughout, isomorphism will mean a Banach algebra isomorphism, that is, a continuous algebra isomorphism with a continuous inverse. (It is worth noting that for these particular algebras, the continuity is automatic; see \cite[Theorem 2.6]{DL}.) If Banach algebras $\calA$ and $\calB$ are isomorphic we shall write $\calA \simeq \calB$. We shall work throughout with algebras of complex-valued functions. We shall identify the plane as either $\mC$ or $\mR^2$ as is notationally convenient.

\section{Definitions}\label{S:Defs}

There are quite a number of concepts of variation for functions defined on subsets of the plane. The definitions used here were first introduced in \cite{AD}, but have undergone a number of simplifications (see \cite{DL}). Given that the motivation for these spaces comes from operator theory, it is important that the definitions apply to functions defined on the spectrum of a bounded operator, that is, on a nonempty compact subset $\sigma$ of the plane.

Suppose then that $f: \sigma \to \mC$.
Let $S = [\vecx_0,\vecx_1,\dots,\vecx_n]$ be a finite ordered list of elements of $\sigma$, where, for the moment, we shall assume that $n \ge 1$.
Let $\gamma_S$ denote the piecewise linear curve joining the points of $S$ in order.
Note that the elements of such a list do not need to be distinct. We will however require that consecutive points are different so that the line segment joining $\vecx_i$ to $\vecx_{i+1}$, denoted $\ls[\vecx_i,\vecx_{i+1}]$, is always a proper line segment.

The \textit{curve variation of $f$ on the set $S$} is defined to be
\begin{equation*} \label{lbl:298}
    \cvar(f, S) =  \sum_{i=1}^{n} \abs{f(\vecx_{i}) - f(\vecx_{i-1})}.
\end{equation*}
Associated to each list $S$ is its variation factor $\vf(S)$. Loosely speaking, this is the greatest number of times that $\gamma_S$ crosses any line in the plane.
To make this more precise we need the concept of a crossing segment.

\begin{definition}\label{crossing-defn}
Suppose that $\ell$ is a line in the plane. For $0 \le i < n$ we say that $\ls[\vecx_i,\vecx_{i+1}]$ is a \textbf{crossing segment} of $S = [\vecx_0,\vecx_1,\dots,\vecx_n]$ on $\ell$ if any one of the following holds:
\begin{enumerate}
  \item $\vecx_{i}$ and $\vecx_{i+1}$ lie on (strictly) opposite sides of $\ell$.
  \item $i=0$ and $\vecx_{i} \in \ell$.
  \item $\vecx_{i}\notin  \ell$ and $\vecx_{i+1} \in \ell$.

\end{enumerate}
\end{definition}

\begin{definition}\label{vf-defn}
Let $\vf(S,\ell)$ denote the number of crossing segments of $S$ on $\ell$. The \textbf{variation factor} of $S$ is defined to be
 $\ds \vf(S) = \max_{\ell} \vf(S,\ell)$.
\end{definition}

Clearly $1 \le \vf(S) \le n$. For completeness, in the case that
$S =[\vecx_0]$ we set $\cvar(f, [\vecx_0]) = 0$ and let $\vf([\vecx_0],\ell) = 1$ whenever $\vecx_0 \in \ell$.

\begin{definition}\label{2d-var}
The \textbf{two-dimensional variation} of a function $f : \sigma
\rightarrow \mathbb{C}$ is defined to be
\begin{equation*}
    \var(f, \sigma) = \sup_{S}
        \frac{ \cvar(f, S)}{\vf(S)},
\end{equation*}
where the supremum is taken over all finite ordered lists of elements of $\sigma$.
\end{definition}

The \textit{variation norm} of such a function is
  \[ \normbv{f} = \norm{f}_\infty + \var(f,\sigma) \]
and the set of functions of bounded variation on $\sigma$ is
  \[ \BV(\sigma) = \{ f: \sigma \to \mC \st \normbv{f} < \infty\}. \]
The space $\BV(\sigma)$ is a Banach algebra under pointwise operations \cite[Theorem 3.8]{AD}.

It is clear that if $\tau = \alpha \sigma + \beta$ is some nontrivial affine transformation of $\sigma$, then $BV(\sigma)$ is isometrically isomorphic to $BV(\tau)$. It is less obvious, but nonetheless true, that if  $\sigma = [a,b] \subseteq \mR$ then the above definition is equivalent to the more classical one. Importantly, $BV(\sigma)$ always contains all sufficiently smooth functions.

Let $\calP_2$ denote the algebra of complex polynomials in two real variables of the form $p(x,y) = \sum_{n,m} c_{nm} x^n y^m$, and let $\calP_2(\sigma)$ denote the restrictions of elements on $\calP_2$ to $\sigma$ (considered as a subset of $\mR^2$). The algebra $\calP_2(\sigma)$ is always a subalgebra of $\BV(\sigma)$ \cite[Corollary 3.14]{AD}.

\begin{definition}
The set of \textbf{absolutely continuous} functions on $\sigma$, denoted $AC(\sigma)$, is the closure of $\calP_2(\sigma)$ in $\BV(\sigma)$.
\end{definition}

The set $AC(\sigma)$ forms a closed subalgebra of $\BV(\sigma)$ and hence is a Banach algebra. Again, if $\sigma = [a,b]$ this definition reduces to the classical definition.

\section{Convex curves}\label{S:CC}

Let $C$ denote a finite length curve with parametrization $\gamma(t)$, $0 \le t \le L$ and endpoints $\vecx = \gamma(0)$ and $\vecy = \gamma(L)$. We shall say that $C$ is convex if
it has a supporting line through each point of the curve.
Convex curves are differentiable (that is, have a well-defined tangent) almost everywhere. To simplify matters we shall actually assume that each of our curves is differentiable, except at its endpoints. (Since we interested in sets which can be written as a union of such curves, this is not a major restriction.)
We shall generally use arc-length parameterizations, so that $\norm{\gamma'(t)} = 1$ for $0 < t < L$.

Let $\calC$ denote the set of differentiable convex curves in the plane with distinct endpoints.

\begin{definition} Suppose that $C \in \calC$ has endpoints $\vecx$ and $\vecy$.  We shall say that $C$ is \textbf{projectable} if  the orthogonal projection of $C$ onto the line through $\vecx$ and $\vecy$ is precisely the line segment $\ls[\vecx,\vecy]$. (See Figure~\ref{proj-cc-pic})
\end{definition}


\begin{figure}[ht!]
\begin{center}
\begin{tikzpicture}[scale=2]
\draw[ultra thick,blue, dashed] (0,0.3) -- (1,1.3);
\draw[ultra thick, black] (1,1.3) arc (90:180:1);
\draw (0,0.3) node[below] {$\vecx_1$};
\draw (1,1.3) node[right] {$\vecy_1$};
\draw (0,1) node {$C_1$};

\draw[ultra thick,black] (4.5970, .46028) -- (4.6239, .48117) -- (4.6484, .50253) -- (4.6703, .52433) -- (4.6896, .54652) -- (4.7064, .56904) -- (4.7204, .59186) -- (4.7318, .61492) -- (4.7404, .63816) -- (4.7463, .66154) -- (4.7494, .68500) -- (4.7498, .70850) -- (4.7474, .73198) -- (4.7423, .75538) -- (4.7344, .77867) -- (4.7238, .80178) -- (4.7105, .82467) -- (4.6945, .84728) -- (4.6759, .86957) -- (4.6547, .89148) -- (4.6309, .91297) -- (4.6047, .93399) -- (4.5760, .95449) -- (4.5449, .97443) -- (4.5115, .99376) -- (4.4759, 1.0124) -- (4.4381, 1.0304) -- (4.3983, 1.0477) -- (4.3564, 1.0642) -- (4.3127, 1.0799) -- (4.2672, 1.0948) -- (4.2200, 1.1087) -- (4.1712, 1.1218) -- (4.1209, 1.1340) -- (4.0692, 1.1452) -- (4.0163, 1.1554) -- (3.9622, 1.1646) -- (3.9072, 1.1727) -- (3.8512, 1.1799) -- (3.7944, 1.1859) -- (3.7370, 1.1909) -- (3.6791, 1.1948) -- (3.6208, 1.1977) -- (3.5622, 1.1994) -- (3.5035, 1.2000) -- (3.4448, 1.1995) -- (3.3862, 1.1979) -- (3.3278, 1.1952) -- (3.2698, 1.1914) -- (3.2124, 1.1866) -- (3.1555, 1.1806) -- (3.0994, 1.1736) -- (3.0442, 1.1656) -- (2.9901, 1.1565) -- (2.9370, 1.1464) -- (2.8852, 1.1353) -- (2.8347, 1.1233) -- (2.7858, 1.1103) -- (2.7384, 1.0965) -- (2.6926, 1.0817) -- (2.6487, 1.0661) -- (2.6066, 1.0497) -- (2.5665, 1.0325) -- (2.5285, 1.0146) -- (2.4926, .99602) -- (2.4590, .97676) -- (2.4276, .95690) -- (2.3986, .93646) -- (2.3720, .91550) -- (2.3480, .89406) -- (2.3265, .87220) -- (2.3075, .84995) -- (2.2912, .82738) -- (2.2776, .80452) -- (2.2667, .78143) -- (2.2585, .75816) -- (2.2530, .73477) -- (2.2503, .71130) -- (2.2504, .68780) -- (2.2532, .66433) -- (2.2588, .64094) -- (2.2671, .61768) -- (2.2781, .59460) -- (2.2918, .57174) -- (2.3082, .54918) -- (2.3272, .52695) -- (2.3489, .50510) -- (2.3730, .48368) -- (2.3997, .46274) -- (2.4288, .44233) -- (2.4602, .42248) -- (2.4940, .40324) -- (2.5299, .38467) -- (2.5680, .36678) -- (2.6082, .34964) -- (2.6503, .33326) -- (2.6944, .31770) -- (2.7402, .30298) -- (2.7876, .28914) -- (2.8367, .27621) -- (2.8872, .26421);
\draw[ultra thick,blue, dashed] (2.8872, .26421) -- (4.5970, .46028);
\draw[red,dashed] (2.8872, .26421) -- (2.4276, .95690) -- (4.5115, .99376) -- (4.5970, .46028);
\draw (2.8872, .26421)  node[below] {$\vecx_2$};
\draw (4.5970, .46028) node[below] {$\vecy_2$};
\draw (4.7494, .68500) node[right] {$C_2$};
\draw (2.35,1.05) node {$\vecv_1$};
\draw (4.6,1.1) node {$\vecv_2$};
\end{tikzpicture}
\end{center}
\caption{$C_1$ is a projectable convex curve, while $C_2$ is a nonprojectable convex curve, which could be split into three projectable curves.}\label{proj-cc-pic}
\end{figure}
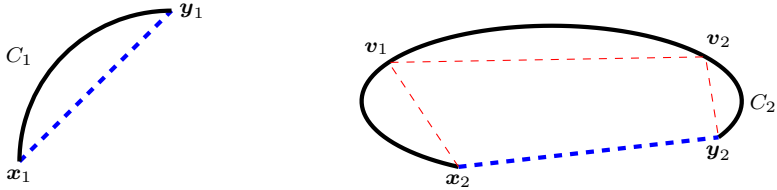

Given a nonprojectable convex curve $C$, one may always split it into  projectable curves. 
The proof of the following proposition is left to the reader.

\begin{proposition}\label{projectable}
Suppose that $C$ is a differentiable convex curve. Then $C$ can be split into a finite number of projectable curves.
\end{proposition}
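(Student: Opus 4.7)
The plan is to control the splitting via the rotation of the tangent direction. Parametrize $C$ by arc length as $\gamma : [0,L] \to \mR^2$ and let $\theta(t)$ be a continuous determination of the tangent angle, so that $\gamma'(t) = (\cos\theta(t),\sin\theta(t))$ on $(0,L)$. The defining property of a differentiable convex curve (a supporting line at every point, which for a smooth curve must coincide with the tangent line) forces $\theta$ to be monotone; since $C$ is not closed, its total variation on $[0,L]$ is finite and in fact bounded by $2\pi$.

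The key observation is that if $\theta$ varies by strictly less than $\pi/2$ on $[0,L]$, then $C$ is itself projectable. To see this, set $\vecx = \gamma(0)$, $\vecy = \gamma(L)$, and let $\vecu = (\cos\phi,\sin\phi) = (\vecy-\vecx)/\norm{\vecy-\vecx}$ denote the unit chord direction. By the mean value theorem there is $t^* \in (0,L)$ with $\gamma'(t^*) \parallel \vecu$, so $\theta(t^*) \equiv \phi \pmod{\pi}$; since $\theta$ has range of length less than $\pi/2 < \pi$, we may replace $\phi$ by $\phi + \pi$ if necessary to arrange $\theta(t^*) = \phi$, and then $\abs{\theta(t)-\phi} < \pi/2$ for every $t \in [0,L]$. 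Consequently
\[
\frac{d}{dt}\bigl(\vecu\cdot\gamma(t)\bigr) = \cos(\theta(t)-\phi) > 0,
\]
so $t \mapsto \vecu\cdot\gamma(t)$ is strictly increasing and its range is exactly $[\vecu\cdot\vecx,\vecu\cdot\vecy]$; this is precisely the condition that the orthogonal projection of $C$ onto the line through $\vecx$ and $\vecy$ equals $\ls[\vecx,\vecy]$.

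To finish, choose $0 = t_0 < t_1 < \dots < t_n = L$ with $\abs{\theta(t_{i+1}) - \theta(t_i)} < \pi/2$ for each $i$; finitely many such points exist by the finite total variation of $\theta$. Each sub-arc $C_i = \gamma([t_i,t_{i+1}])$ inherits supporting lines from $C$ at every interior point, so it is a differentiable convex curve, and its endpoints are distinct because $\int_{t_i}^{t_{i+1}} (\cos\theta,\sin\theta)\,dt$ is nonzero whenever $\theta$ traverses an arc of length less than $\pi$ (the integrand is a family of unit vectors lying in an open half-plane). The key observation applied to each $C_i$ now shows $C_i$ is projectable, completing the proof. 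The only genuinely substantive ingredient is the monotonicity of the tangent angle for a differentiable convex arc, which is a standard geometric fact and could either be cited or established in a short preliminary lemma; granted this, the remainder is elementary calculus.
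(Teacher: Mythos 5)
The paper does not actually prove this proposition --- it is explicitly ``left to the reader'' --- so there is no argument of the authors' to compare yours against; judged on its own, your turning-angle proof is correct and is a natural way to fill the gap. Two remarks on where the real weight sits. First, everything rests on the package of facts you defer to a ``preliminary lemma'': for a differentiable curve the supporting line at a point must be the tangent line, the tangent angle then admits a monotone determination, and this determination is continuous not because $\gamma'$ is assumed continuous (it is not) but because a monotone function with the Darboux property of derivatives cannot jump; finiteness of the total turning follows because $C$ lies in the boundary of its convex hull, whose total turning is $2\pi$. These should indeed be stated, since they are exactly what distinguishes a convex arc from a general differentiable arc. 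Second, your mean value theorem step needs a word of care: the vector-valued MVT fails in the form you invoke, but applying Rolle's theorem to $t\mapsto \vecv\cdot\gamma(t)$ with $\vecv\perp\vecu$ gives the desired $t^*$; alternatively, and more cleanly, $\vecy-\vecx=\int_0^L\gamma'(t)\,dt$ is a positive combination of unit vectors whose angles lie in an interval of length less than $\pi/2$, so its angle $\phi$ lies in that same interval and no sign adjustment is ever needed. Granting these, the strict monotonicity of $\vecu\cdot\gamma$ gives exactly the projectability condition, the half-plane argument correctly shows each sub-arc has distinct endpoints, and the subdivision is finite by the bounded total turning. (The threshold $\pi/2$ is sufficient rather than sharp, but that is irrelevant to the statement.)
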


\section{Polygonally inscribed curves}\label{S:PIC}

To be definite, the term polygon will mean a simple closed polygon including its interior, and so all polygons are homeomorphic to the closed disk.

\begin{definition} A (convex) \textbf{polygonal mosaic} in the plane is a finite collection $\calM$ of convex polygons such that
 \begin{enumerate}
  \item $\bigcup_{P \in \calM} P$ is connected.
  \item if $P \ne Q \in \calM$ intersect, then $P \cap Q$ is either
     \begin{itemize}
       \item  a single point which is a vertex of both $P$ and $Q$, or
       \item a line segment, which forms a full side of both $P$ and $Q$.
 \end{itemize}
 \end{enumerate}
\end{definition}

Some of our estimates will depend on the nature of the polygons which are elements of the mosaic.

\begin{definition}\label{S(M)-def}
For a polygon $P$ let $S(P)$ denote the number of sides of $P$. For a polygonal mosaic $\calM = \{P_i\}_{i=1}^M$, let $S(\calM) = \max_i \{S(P_i)\}$.
\end{definition}

\begin{definition} A nonempty compact connected set $\sigma$ is a \textbf{polygonally inscribed curve} if there exists a polygonal mosaic $\calM = \{P_i\}_{i=1}^M$ such that for each $i$, $\sigma \cap P_i$ is a differentiable convex curve $c_i$ joining two vertices of $P_i$ which only touches the boundary of $P_i$ at those points. The curves $c_i$ will be called the \textbf{components} of $\sigma$.

We shall denote the collection of all polygonally inscribed curves as $\PIC$.
\end{definition}

\begin{figure}[ht!]
\centering
\begin{tikzpicture}[xscale=1.6,yscale=1.2]

\draw[fill,green!10] (1,0) -- (1.5,1) -- (2,1) -- (2,0) -- (1.5,-0.5) -- (1,0);
\draw[fill,green!10] (1,1) -- (-0.866,1) -- (-0.866,0.5) -- (-1.5,0) -- (-0.866,-0.5) -- (-0.866,-1) -- (1,-1) -- (1,1);

\draw[blue,dashed] (1,0) -- (1.5,1) -- (2,1) -- (2,0) -- (1.5,-0.5) -- (1,0);
\draw[blue,dashed] (1,0) -- (0,0) -- (0,1) -- (1,1) -- (1,0);
\draw[blue,dashed] (0,1) -- (0,0) -- (-0.866,0.5) -- (-0.866,1) -- (0,1);
\draw[blue,dashed]  (-0.866,-0.5) -- (-1.5,0) -- (-0.866,0.5);
\draw[blue,dashed] (-0.866,-0.5) -- (0,0) -- (0,-1) -- (-0.866,-1) -- (-0.866,-0.5);
\draw[blue,dashed]  (0,-1) -- (1,-1) -- (1,0);

\draw[ultra thick,black] (1,0) parabola (2,1);
 \draw[ultra thick,black] (-0.866,-0.5) arc (-150:150:1);
 \draw[ultra thick,black] (-0.866,-0.5) arc (-30:30:1);

\node (x1) at (2,1) [circle,fill,red,scale=0.5] {};
\node (x2) at (1,0) [circle,fill,red,scale=0.5] {};
\node (x3) at (0,1) [circle,fill,red,scale=0.5] {};
\node (x4) at (-0.866,0.5) [circle,fill,red,scale=0.5] {};
\node (x5) at (-0.866,-0.5) [circle,draw,fill,red,scale=0.5] {};
\node (x6) at (0,-1) [circle,draw,fill,red,scale=0.5] {};

\draw (0.6,0.55) node {$\sigma$};

\end{tikzpicture}
\caption{A polygonally inscribed curve $\sigma$ with a suitable polygonal mosaic. The red dots represent the vertices of $G_{\sigma,\calM}$.}\label{poly-mos}
\end{figure}
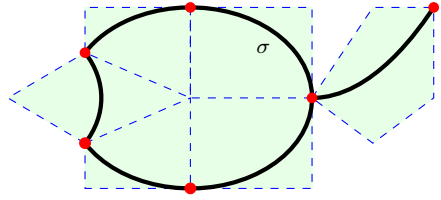

If we write below that $\sigma = \cup_{i=1}^M c_i$, then we will be implicitly assuming that there is a corresponding underlying polygonal mosaic $\calM$. As will be important later, the decomposition of a set $\sigma \in \PIC$ into a union of convex curves is far from unique.

It is worth noting that not every connected finite union of convex curves lies
in $\PIC$. Let $c_1 = \{(x, x^2) \st 0 \le x \le 1\}$, $c_2 = \{(x, x^3) \st 0 \le x \le 1\}$, and let
$\sigma = c_1 \cup c_2$. The curves $c_1$ and $c_2$ are smooth convex curves, but it is impossible to
find a polygonal mosaic ${P_1, P_2}$ so that $c_1 = \sigma \cap P_1$ and $c_2 = \sigma \cap  P_2$. Indeed, the
fact that $c_1$ and $c_2$ meet tangentially at $(0, 0)$ with the same convexity means that
one cannot get around this by splitting $\sigma$ into smaller pieces.

\section{$\PIC$ sets and graphs}\label{S:PIC-graphs}

Abstractly a graph $G = G(V,E)$ is determined by its vertex set $V$ and its edge set $E$. Given $\sigma = \cup_{i=1}^M c_i \in \PIC$, let $V_{\sigma,\calM}$ be the set of all endpoints of the curves $c_i$, and let $E_{\sigma,\calM} = \{c_i\}_{i=1}^M$ represent edges between points in $V_{\sigma,\calM}$. Thus $\sigma$ is a drawing of the graph $G_{\sigma,\calM} = G(V_{\sigma,\calM},E_{\sigma,\calM})$ (see Figure~\ref{poly-mos}).

Clearly the graph is not uniquely determined by $\sigma$. The definition of $\PIC$ precludes $G_{\sigma,\calM}$ from having any loops, but it may contain multiple edges between two vertices. It will be preferable later to avoid this situation so an important tool will be the Partition Lemma below which will ensure that if $\sigma \in \PIC$ we may always choose a partition $\calM$ so that $G_{\sigma,\calM}$ is a simple graph.

\begin{lemma}[Partition Lemma]\label{Part-Lem} Suppose that $P$ is a convex polygon and that
$c$ is a differentiable convex curve in $P$ joining one vertex $v_1$ of $P$ to another vertex $v_2$. Suppose that
$v$ is a point on $c$ in the interior of $P$. Then there exist convex polygons $P_1,  P_2  \subseteq P$
which only intersect at their boundaries and such that for $j = 1, 2$
\begin{enumerate}
 \item $v_j$ and $v$ are vertices of $P_j$ , and
 \item $c \cap P_j$ is a convex curve joining $v_j$ and $v$.
\end{enumerate}
\end{lemma}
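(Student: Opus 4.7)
The plan is to realise $P_1$ and $P_2$ as intersections of $P$ with two convex angular sectors sharing the apex $v$, each sector containing exactly one of the two sub-arcs of $c$ meeting at $v$.

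Since $v \in \intr(P)$ is not an endpoint of $c$, the curve $c$ is differentiable at $v$. Let $\vec{t}$ be the unit tangent at $v$, directed from $c_1$ to $c_2$, and $\ell_v$ the tangent line; convexity forces $c$ to lie in one closed half-plane bounded by $\ell_v$ --- the \emph{bulge side}. Write $c = c_1 \cup c_2$ where $c_j$ is the sub-arc from $v_j$ to $v$, set $\vec{d}_j = (v_j - v)/\|v_j - v\|$, and let $\overline{S_j}$ denote the closed angular sector at $v$ bounded by the rays in directions $(-1)^j\vec{t}$ and $\vec{d}_j$ on the bulge side. Define $P_j := P \cap \overline{S_j}$.

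The technical core is to show that $c_j \subseteq \overline{S_j}$ while $c_{3-j} \cap \overline{S_j} = \{v\}$. Consider the direction map $\phi \colon c \setminus \{v\} \to S^1$, $\phi(p) = (p-v)/\|p-v\|$. Because any line meets the convex arc $c$ in at most two points, $\phi$ is injective: any failure of injectivity would place three collinear points on $c$. By continuity, $\phi$ is then monotone on each connected component $c_j \setminus \{v\}$, sweeping from $\vec{d}_j$ (at $v_j$) to the tangential limit $(-1)^j\vec{t}$ (as $p \to v$) with image exactly the open arc that defines $\overline{S_j}$. In particular, the two image arcs are disjoint sub-arcs of the bulge half-circle of directions, so the sectors meet only at the apex $v$.

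Once this is in place the remainder is bookkeeping. The set $P_j$ is a convex polygon as the intersection of two convex polytopal regions; the apex $v$ is a corner of $\overline{S_j}$ and, being interior to $P$, is a genuine vertex of $P_j$; the bounding ray in direction $\vec{d}_j$ first leaves $P$ at $v_j$ (since $\overline{v\,v_j} \subseteq P$ by convexity and $v_j \in \partial P$), making $v_j$ a vertex of $P_j$; $c \cap P_j = c \cap \overline{S_j} = c_j$ is the required convex sub-arc joining $v_j$ and $v$; and $P_1 \cap P_2 = \{v\}$ is a common boundary vertex. The principal obstacle is the injectivity-and-monotonicity argument for $\phi$, after which everything is nearly automatic. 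One must also address a (partially) degenerate case in which $c$ contains a straight sub-segment of $\ell_v$ from $v$ to some $v_j$, making $\vec{d}_j$ parallel to $\vec{t}$ and the sector $\overline{S_j}$ collapse; this is handled by replacing the collapsed sector with a thin convex triangle whose third vertex sits just off $\ell_v$ on the bulge side, chosen close enough to $v$ that the triangle lies in $P$ and avoids $c_{3-j}$.
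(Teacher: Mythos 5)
Your construction -- two angular sectors with apex $v$, each spanned by a tangent direction $(-1)^j\vec{t}$ and a chord direction $\vec{d}_j$ -- is genuinely different from the paper's proof, which instead cuts $P$ along two lines through $v$ and auxiliary points $\vecm,\vecm'$ chosen on the part of $\partial R_c$ not lying on $c$ (here $R_c$ is the convex region bounded by $c$ and sides of $P$); Lemma~\ref{6.3.5} guarantees each such line meets $c$ only at $v$, so the tangent line never enters the argument. In the generic case, where $c\cap\ell_v=\{v\}$ and neither $v_1$ nor $v_2$ lies on $\ell_v$, your argument is sound: $\phi$ is injective, the two direction arcs are disjoint closed subarcs of the bulge half-circle, and the remaining verifications go through as you describe.

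The gap is in the degenerate cases, which are genuinely in scope (a differentiable convex curve may contain line segments) and are not correctly handled. First, the claim that ``any line meets the convex arc $c$ in at most two points'' is false in general; what is true (via Lemma~\ref{6.3.5}) is that three collinear points of $c$ force a straight piece of $c$, and one can check that $\phi$ fails to be injective exactly when $c$ contains a segment issuing from $v$ along $\ell_v$ -- so injectivity must be argued, not assumed, and it genuinely fails in the case you flag. Second, and more seriously, the proposed repair of the collapsed sector does not work as stated. Suppose $c_1=\ls[v,{v_1}]$ lies along $\ell_v$ and $c$ is a very flat convex arc, so that $\vec{d}_2$ is close to $-\vec{t}=\vec{d}_1$; then $\overline{S_2}$ has angle close to $\pi$ and contains almost every point just off the open segment $\ls[v,{v_1}]$ on the bulge side. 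A triangle over $\ls[v,{v_1}]$ whose apex is merely ``just off $\ell_v$ \dots close enough to $v$'' can then meet $P_2=P\cap\overline{S_2}$ in a two-dimensional set near $v$, violating the requirement that $P_1$ and $P_2$ intersect only in their boundaries; avoiding $c_2$ is not the binding constraint. What is actually needed is to place the apex so that its direction from $v$ lies strictly inside the angular gap between $\vec{d}_2$ and $-\vec{t}$, together with a proof that this gap is nonempty, i.e.\ that $\vec{d}_1\ne\vec{d}_2$. The latter does hold -- two distinct boundary points of the convex polygon $P$ cannot lie on a common ray from the interior point $v$, since the nearer one would then be interior to $P$ -- but it must be stated and used. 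With these repairs your route closes; as written it fails on flat curves, precisely the configurations the paper's transversal-cut argument is designed to avoid.
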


In any specific example it is generally easy to do such a partitioning. Showing
that this is always possible requires the following general fact about convex curves and sets whose proof we leave to the reader.

\begin{lemma}\label{6.3.5} Suppose that $K$ is a closed convex subset of the plane with boundary $\partial K$ and that $\ell$ is
a line in the plane which intersects $\partial K$ in at least three places. Then
the line segment joining any two such points lies inside $\ell \cap \partial K$.
\end{lemma}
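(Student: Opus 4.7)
The plan is to argue by contradiction, relying on the standard half-open segment property of convex sets: if $x \in \intr K$ and $y \in K$, then the segment from $x$ to $y$ minus its endpoint $y$ lies in $\intr K$. First I would dispose of the degenerate case $\intr K = \emptyset$: a convex subset of the plane with empty interior lies in a single line, and for $\ell$ to meet $\partial K$ in three or more collinear points then forces $\ell$ to coincide with that line; in this case $K \subseteq \ell$ and $\partial K = K$, so any segment joining two points of $K$ trivially lies in $\ell \cap \partial K$.

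Assume then that $\intr K \ne \emptyset$. Given two points $q_1, q_2 \in \ell \cap \partial K$ whose joining segment we wish to place inside $\ell \cap \partial K$, I would pick a third boundary point $p \in \ell \cap \partial K$ (guaranteed by hypothesis), order the three points along $\ell$, and relabel them as $a, b, c$ with $b$ strictly between $a$ and $c$; all three still lie in $\partial K$. Since $[q_1, q_2] \subseteq [a, c] \subseteq \ell$, it suffices to show $[a, c] \subseteq \partial K$. Suppose instead that some $x$ strictly between $a$ and $c$ fails to lie in $\partial K$. Convexity gives $x \in K$, and hence $x \in \intr K$. The middle point $b$ lies strictly between $x$ and exactly one of $a, c$; calling that endpoint $y$, we have $b$ lying on the half-open segment from $x$ to $y$, which is contained in $\intr K$ by the standard property. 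This contradicts $b \in \partial K$ and completes the argument.

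The only substantive ingredient is the half-open segment property of convex sets; once that is granted, the rest is a one-dimensional case analysis along $\ell$ together with the degenerate-case reduction at the start. I therefore do not anticipate any real obstacle, which is consistent with the authors being happy to leave the proof to the reader.
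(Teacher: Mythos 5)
Your proof is correct. The paper explicitly leaves this lemma to the reader, so there is no written proof to compare against; your argument --- reducing to the ordering of three collinear boundary points along $\ell$ and invoking the half-open segment property (if $x$ lies in the interior of $K$ and $y \in K$, then the segment from $x$ to $y$ minus $y$ lies in the interior) to contradict $b \in \partial K$ --- is a standard and complete way to establish it, and the degenerate case of empty interior is handled properly.
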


\begin{proof}[of Lemma~\ref{Part-Lem}]
 Suppose that P is a convex polygon and that $c$ is a
convex curve joining one vertex $\vecv_1$ of $P$ to another vertex $\vecv_2$. At each point $\vecw \in c$
there is a closed tangent half-plane which contains all of $c$. The intersection of
all these half-planes and the polygon $P$ is therefore a closed convex set $R_c$ whose
boundary consists of the curve $c$ and one or more of the sides of $P$.
Let $\vecm$ be any point on the boundary of $R_c$ which is not on $c$, and let $\ell_\vecm$ be the line through
$\vecv$ and $\vecm$. Note that by the lemma, $\ell_\vecm$ cannot intersect $\partial R_c$ at any
other points (since $\vecv$ does not lie on any line segment in the boundary of $R_c$ which
contains $\vecm$). It follows that $\ell_\vecm$ cuts $c$ into two parts at $\vecv$. Indeed if we intersect $P$
with the two closed half-planes bounded by $\ell_\vecm$, then we obtain two closed convex
polygons $Q_1$, chosen to contain the part of $c$ containing $\vecv_1$ and $\vecv$, and $Q_2$, containing
the part of $c$ containing $\vecv_2$ and $\vecv$.

Applying this construction to  a different point $\vecm' \in \partial R_\vecc \setminus c$ produces two polygons $Q_1'$ and $Q_2'$ with the same properties. Note that (again by the lemma), $\ell_{\vecm}$ and $\ell_{\vecm'}$ are distinct, and meet at $\vecv$. This means that if we set $P_1 = Q_1 \cap Q_1'$ and $P_2 = Q_2 \cap Q_2'$ then these polygons have a vertex at $\vecv$ and so satisfy the conclusions of the lemma.
\end{proof}

\begin{figure}\begin{center}
\begin{tikzpicture}[scale=1.75]
  \draw[fill,blue!10] (0,0) -- (0.555,-0.15) -- (1.2290,1.3190) -- (1.167,2) -- (0,2) -- (-0.7,1) -- (0,0);
  \draw[fill,green!10] (2,0) -- (2.7,0.7) -- (2,2) -- (1.53,2) -- (1.2290,1.3190) -- (1.4,-0.15) -- (2,0);
  \draw[thick,black] (0,0) -- (1,-0.2) -- (2,0) -- (2.7,0.7) -- (2,2) -- (0,2) -- (-0.7,1) -- (0,0);
  \draw[ultra thick,blue] (0,0) -- (1,-0.2) -- (2,0) -- (2.7,0.7);
  \draw[ultra thick,red] (2.7,0.7) -- (1.2290,1.3190) -- (1.1860,1.3270) -- (1.1430,1.3340) -- (1.1000,1.3390) -- (1.0560,1.3430) -- (1.0130,1.3440) -- (0.9695,1.3440) -- (0.9257,1.3410) -- (0.8831,1.3370) -- (0.8396,1.3320) -- (0.7965,1.3240) -- (0.7547,1.3150) -- (0.7123,1.3040) -- (0.6704,1.2910) -- (0.6291,1.2760) -- (0.5894,1.2600) -- (0.5495,1.2420) -- (0.5104,1.2220) -- (0.4730,1.2010) -- (0.4356,1.1780) -- (0.3992,1.1540) -- (0.3647,1.1280) -- (0.3305,1.1010) -- (0.2974,1.0720) -- (0.2664,1.0430) -- (0.2358,1.0110) -- (0.2067,0.9786) -- (0.1795,0.9455) -- (0.1531,0.9105) -- (0.1283,0.8744) -- (0.1054,0.8382) -- (0.0836,0.8002) -- (0.0634,0.7612) -- (0.0453,0.7224) -- (0.0283,0.6820) -- (0.0132,0.6409) -- (0.0001,0.6002) -- (-0.0120,0.5579) -- (-0.0220,0.5152) -- (-0.0300,0.4721) -- (-0.0360,0.4298) -- (-0.0400,0.3861) -- (-0.0430,0.3424) -- (-0.0440,0.2996) -- (-0.0430,0.2557) -- (-0.0400,0.2120) -- (-0.0360,0.1694) -- (-0.0290,0.1261) -- (-0.0210,0.0840) -- (-0.0110,0.0413) -- (0.0003,-0.0010);
  \draw[blue,dashed] (0.3, -0.6291) -- (1.7, 2.307);
  \draw[blue,dashed] (1.482, -0.6325) -- (1.12, 2.160);

  \draw[fill,black] (0,0) circle (1pt);
  \draw (-0.1,-0.1) node {$\vecv_1$};
  \draw[fill,black] (2.7,0.7) circle (1pt);
  \draw (2.7,0.7) node[right] {$\vecv_2$};
  \draw[fill,black] (1.2290,1.3190) circle (1pt);
  \draw (1.2290,1.360) node[right] {$\vecv$};
  \draw (0.6291,1.30) node[left] {$c$};
  \draw[fill,black] (0.54,-0.12) circle (1pt);
  \draw (0.7,-0.26) node {$\vecm$};
  \draw[fill,black] (1.42,-0.12) circle (1pt);
  \draw (1.7,-0.2) node {$\vecm'$};
  \draw (1.65,2.2) node[right] {$\ell_\vecm$};
  \draw (1.1,2.2) node[left] {$\ell_{\vecm'}$};
  \draw (0,1.5) node {$P_1$};
  \draw (2,1.5) node {$P_2$};
\end{tikzpicture}\end{center}
\caption{The construction in the proof of Lemma~\ref{Part-Lem}. The red and blue curves bound $R_c$.}\label{6.3.5 pic}
\end{figure}
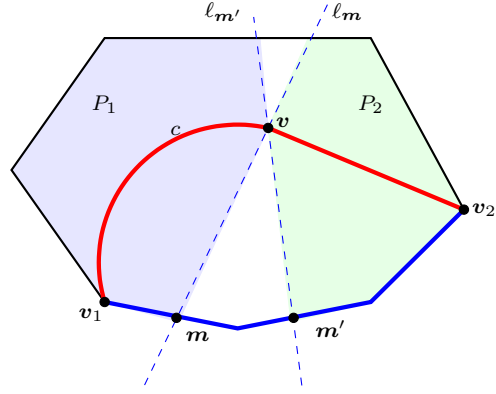

\begin{definition} We shall say that a polygonal mosaic  $\calM$ is a \textbf{simple polygonal mosaic for $\sigma$} if no two components
$c_1$ and $c_2$ of $\sigma$ have the same pair of endpoints. (That is, if $G_{\sigma,\calM}$ is a simple graph.)
\end{definition}

The following result is a consequence of Proposition~\ref{projectable} and the Partition Lemma.

\begin{theorem}\label{good-mosaic}
If $\sigma \in \PIC$ then there exists a simple polygonal mosaic $\calM = \{P_i\}_{i=1}^M$ for $\sigma$ such that each component curve $c_i = \sigma \cap P_i$ is a projectable convex curve.
\end{theorem}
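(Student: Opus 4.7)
The plan is to begin with any polygonal mosaic $\calM_0 = \{P_i\}_{i=1}^M$ for $\sigma$ (which exists by the very definition of $\PIC$) and to refine it in two successive stages, the first enforcing projectability of all components and the second destroying any remaining multi-edges.

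\emph{Stage 1 (projectability).} For each component $c_i = \sigma \cap P_i$ of $\calM_0$ that fails to be projectable, Proposition~\ref{projectable} produces finitely many interior points of $c_i$ which split it into projectable sub-arcs. I would then apply the Partition Lemma (Lemma~\ref{Part-Lem}) iteratively at each such split point $v$: this yields two convex sub-polygons of $P_i$ sharing $v$ as a common vertex, each of which contains exactly one of the two sub-arcs of $c_i$ meeting at $v$. Iterating through every split point of every non-projectable component produces a refined mosaic $\calM_1$ in which each component is projectable.

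\emph{Stage 2 (simplicity).} If $\calM_1$ still contains two distinct components $c_\alpha$ and $c_\beta$ with a common pair of endpoints $\{u,w\}$, pick any interior point $v \in c_\alpha$ and apply the Partition Lemma to $P_\alpha$ at $v$. This replaces $c_\alpha$ by two arcs with endpoints $\{u,v\}$ and $\{v,w\}$, neither of which matches $\{u,w\}$, so the multi-edge between $c_\alpha$ and $c_\beta$ is eliminated. If either of the new arcs happens not to be projectable, I would re-apply Stage 1 to subdivide it further. Because $v$ lies in the interior of $P_\alpha$, it cannot coincide with any endpoint of a component lying in a neighbouring polygon, so no new multi-edges are created. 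The number of multi-edges is finite and strictly decreases at each iteration, so after finitely many applications the mosaic is simple, and every component is projectable.

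\emph{Main obstacle.} The substantive difficulty is not the inductive scheme itself but the book-keeping required to check that every refinement is still a polygonal mosaic: the new sub-polygons must be convex (which the Partition Lemma supplies directly), and the intersection condition that two polygons meet only in a single shared vertex or a full shared side must be preserved. The delicate case is when one of the cutting lines used in the Partition Lemma crosses a side of $P_i$ that is shared with a neighbouring polygon $R$ of the mosaic; in that case $R$ must be subdivided in tandem so that the induced vertex on the shared side also becomes a vertex of $R$. Once this matching subdivision is performed (and any region of $P_i$ left outside the two sub-polygons of the Partition Lemma is attached to the mosaic in a compatible way), the two stages above complete the proof.
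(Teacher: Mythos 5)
Your proof is correct and follows essentially the same route as the paper, which states this theorem as a direct consequence of Proposition~\ref{projectable} and the Partition Lemma: your two-stage refinement (splitting each non-projectable component at the points supplied by Proposition~\ref{projectable} via Lemma~\ref{Part-Lem}, then breaking any remaining multi-edge by a further split at an interior point, which cannot create new multi-edges) is precisely the intended argument. Your closing remark about propagating the new cut-induced vertices to neighbouring polygons of the mosaic is a genuine book-keeping point that the paper suppresses, and subdividing the shared sides in tandem is the right way to preserve the mosaic conditions.
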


\section{$\PIC$ sets and graph isomorphisms}\label{graph-isom}

An important component of the argument in \cite{ASD} is that there is a good correspondence between graph theoretic and the topological concepts of homeomorphism.

\begin{definition}
   Two simple graphs $ G_{1}(V_{1},E_{1}) $ and $ G_{2} (V_{2},E_{2}) $ are called \textbf{(graph) isomorphic} if there exists a bijective mapping, $ f : V_{1} \to V_{2} $ such that there is a edge between $v_{1} $ and $\hat{v}_{1}$ in $G_{1}$, if and only if there exists an edge between $ f(v_{1})$ and $f(\hat{v}_{1})$ in $G_{2} $.
\end{definition}


\begin{definition}
   A \textbf{subdivision} of an edge $\{u,v\}$ of a graph $G$ comprises forming a new graph with an additional vertex $w$, and replacing the edge $\{u,v\}$ with the two edges $\{u,w\}$ and $\{w,v\}$. A \textbf{subdivision} of $G$ is a graph formed by starting with $G$ and performing a finite sequence of subdivisions of edges.
  \end{definition}

If $G = G_{\sigma,\calM}$ is the graph associated to a set $\sigma  = \cup_{i=1}^M c_i \in \PIC$  with associated polygonal mosaic $\calM$, then subdividing an edge just corresponds to splitting one of the curves $c_i$ into two parts. By the Partition Lemma, this new representation of $\sigma$ has a corresponding polygonal mosaic $\calM'$. Of course if $\calM$ is a simple polygonal mosaic for $\sigma$ and all the curves $c_i$ are projectable, then the new representation also has these properties. A subdivision of $G_{\sigma,\calM}$ corresponds to a finite sequence of such curve splittings.

Note in particular that if $\sigma \in \PIC$ has two decompositions into component curves corresponding to two polygonal mosaics $\calM_1$ and $\calM_2$, then, taking the vertex set $V = V_{\sigma,\calM_1} \cup V_{\sigma,\calM_2}$, one can find a common subdivision of the two associated graphs. Each curve in either of the original decompositions of $\sigma$ is then a finite `concatenation' of curves in the subdivision decomposition.

\begin{definition}
    Two graphs  $ G_{1} $ and  $ G_{2} $ are \textbf{graph homeomorphic} if there is a graph isomorphism from some subdivision of $ G_{1} $ to some subdivision of $ G_{2} $.
\end{definition}

For the graphs which concern us, the two notions of homeomorphism agree. A proof of the direction we need here is given in \cite{ASD}.

\begin{theorem}\cite[p.~18]{GT}\label{graph-topol hom}
  Suppose that $G_1$ and $G_2$ are planar graphs with drawings ${\hat G}_1$ and ${\hat G}_2$ in the plane. Then
  $G_1$ and $G_2$ are graph homeomorphic if and only if ${\hat G}_1$ and ${\hat G}_2$ are topologically homeomorphic.
\end{theorem}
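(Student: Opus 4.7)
The plan is to prove both directions. For graph homeomorphism implies topological homeomorphism, the crucial observation is that adding a subdivision vertex on a drawn edge does not alter the underlying point set; it only marks a new distinguished point on an arc. Thus a drawing of any subdivision $G_1'$ of $G_1$ is topologically identical to $\hat G_1$, and similarly for $G_2$. So it suffices to show that a graph isomorphism $f$ between two drawings $\hat G_1'$ and $\hat G_2'$ lifts to a homeomorphism. Each drawn edge is a Jordan arc, hence homeomorphic to $[0,1]$, so one can choose an edgewise homeomorphism from the arc representing $\{u,v\}$ in $\hat G_1'$ to the arc representing $\{f(u),f(v)\}$ in $\hat G_2'$ matching endpoints. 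Pasting these finitely many edgewise homeomorphisms along their shared vertices produces a continuous bijection with continuous inverse.

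For the reverse direction, given a homeomorphism $h : \hat G_1 \to \hat G_2$, I would use a local topological invariant. For each point $p$ in a drawing $\hat G$ and sufficiently small neighborhood $U$ of $p$ in $\hat G$, the number $b(p)$ of connected components of $U \setminus \{p\}$ is intrinsic to the topological space. For loop-free planar drawings one checks that $b(p) = 1$ when $p$ is a degree-$1$ vertex, $b(p) = 2$ when $p$ is an interior point of an edge or a vertex of degree $2$, and $b(p) = \deg(p) \ge 3$ when $p$ is a vertex of degree $\ge 3$. Call a vertex $p$ essential if $\deg(p) \ne 2$; then $h$ restricts to a bijection between the essential vertex sets $V_1^*$ and $V_2^*$ of the two drawings.

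Define the graph $G_j^*$ to have vertex set $V_j^*$ and edge set consisting of the closures of the connected components of $\hat G_j \setminus V_j^*$; each such closure adds exactly two endpoints from $V_j^*$. Because $h$ carries components to components, it induces a graph isomorphism $G_1^* \to G_2^*$. Each $G_j$ is itself a subdivision of $G_j^*$, since its degree-$2$ vertices sit on the arc components and subdivide them. Thus $G_1$ and $G_2$ are both subdivisions of a common abstract graph (via the induced isomorphism $G_1^* \cong G_2^*$); for each edge of this common graph, I simply add further subdivision points to equalize the numbers of subdivision vertices inherited from $G_1$ and from $G_2$. This produces subdivisions $G_1'$ of $G_1$ and $G_2'$ of $G_2$ that are graph isomorphic, so $G_1$ and $G_2$ are graph homeomorphic.

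The main obstacle is the reverse direction. The branching-number invariant must be handled carefully so that homeomorphisms really do preserve the essential vertex set, and one must verify that the induced map $G_1^* \to G_2^*$ respects the edge relation and is a bona fide graph isomorphism. For the drawings arising from $\PIC$ sets, whose component arcs are smooth Jordan arcs and whose underlying graphs (after applying the Partition Lemma) contain no loops, the punctured-neighborhood analysis is routine; for wholly general planar drawings a little more care is needed, but the strategy is the same.
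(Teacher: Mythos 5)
The paper does not actually prove this statement: it is quoted from Gross--Tucker \cite[p.~18]{GT}, with the remark that a proof of the one direction actually needed is given in \cite{ASD}. So there is no in-paper proof to compare against; your outline is essentially the standard argument (subdivisions do not change the drawing as a point set; the branch number $b(p)$ is a topological invariant that recovers the essential vertices). Your forward direction is fine.

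In the reverse direction, however, there is a concrete gap: your construction of $G_j^*$ collapses when a connected component of the drawing has \emph{no} essential vertices, i.e.\ every vertex has degree $2$ and the component is homeomorphic to a circle. Then $V_j^*=\emptyset$, the ``edges'' of $G_j^*$ have no endpoints, and the induced isomorphism is undefined. This is not a pathological case you can wave away for this paper: the set $\sigma_2$ of Figure~\ref{PIC-sets} and the unit circle $\mT$ (singled out in the introduction) are drawings of cycle graphs $C_n$, which are exactly of this form. You must treat such components separately (any two cycles $C_n$, $C_m$ are graph homeomorphic, since both subdivide to $C_{\max(n,m)}$). A second, smaller inaccuracy: your claim that the closure of each component of ${\hat G}_j\setminus V_j^*$ ``adds exactly two endpoints from $V_j^*$'' fails when both ends of an arc limit to the same essential vertex (e.g.\ a figure-eight, or a circle with a single chord after contracting degree-$2$ vertices), so $G_j^*$ is in general a multigraph with loops and parallel edges. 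The paper's definition of graph isomorphism is stated only for simple graphs via a vertex bijection; for $G_j^*$ you need an isomorphism of multigraphs (a bijection of edges compatible with a bijection of vertices), and correspondingly the ``equalize the subdivision counts'' step must be carried out edge by edge of the multigraph. Both defects are repairable, but as written the reverse direction does not cover cases the theorem is actually applied to.
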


Suppose that $\sigma, \tau \in \PIC$ are homeomorphic subsets of the plane. By Theorem~\ref{good-mosaic} there are simple polygonal mosaics $\calM_\sigma$ and $\calM_\tau$ for each of these sets and hence the corresponding graphs $G_\sigma = G_{\sigma,\calM_\sigma}$ and $G_\tau = G_{\tau,\calM_\tau}$ are simple graphs with drawings $\sigma$ and $\tau$.

By Theorem~\ref{graph-topol hom}, $G_\sigma$ and $G_\tau$ are homeomorphic graphs, and hence they admit subdivisions ${\hat G}_\sigma$ and ${\hat G}_\tau$ which are isomorphic graphs. Let $H: {\hat G}_\sigma \to {\hat G}_\tau$ denote the graph isomorphism.
By repeatedly applying the Partition Lemma, we can produce new simple polygonal mosaics ${\widehat \calM}_\sigma = \{P_i\}_{i=1}^M$ and ${\widehat \calM}_\tau = \{P_i'\}_{i=1}^M$ ordered in such a way that for all $i$, the edge $c'_i = P'_i \cap \tau$ in ${\hat G}_\tau$ is the image under $H$ of the edge $c_i = P_i \cap \sigma$ in ${\hat G}_\sigma$.

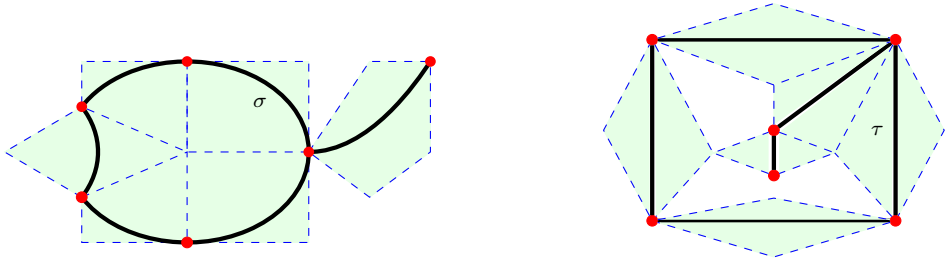
\begin{figure}[ht!] \begin{center}
\begin{tikzpicture}[xscale=1.6,yscale=1.2]

\draw[fill,green!10] (1,0) -- (1.5,1) -- (2,1) -- (2,0) -- (1.5,-0.5) -- (1,0);
\draw[fill,green!10] (1,1) -- (-0.866,1) -- (-0.866,0.5) -- (-1.5,0) -- (-0.866,-0.5) -- (-0.866,-1) -- (1,-1) -- (1,1);

\draw[blue,dashed] (1,0) -- (1.5,1) -- (2,1) -- (2,0) -- (1.5,-0.5) -- (1,0);
\draw[blue,dashed] (1,0) -- (0,0) -- (0,1) -- (1,1) -- (1,0);
\draw[blue,dashed] (0,1) -- (0,0) -- (-0.866,0.5) -- (-0.866,1) -- (0,1);
\draw[blue,dashed]  (-0.866,-0.5) -- (-1.5,0) -- (-0.866,0.5);
\draw[blue,dashed] (-0.866,-0.5) -- (0,0) -- (0,-1) -- (-0.866,-1) -- (-0.866,-0.5);
\draw[blue,dashed]  (0,-1) -- (1,-1) -- (1,0);

\draw[ultra thick,black] (1,0) parabola (2,1);
 \draw[ultra thick,black] (-0.866,-0.5) arc (-150:150:1);
 \draw[ultra thick,black] (-0.866,-0.5) arc (-30:30:1);

\node (x1) at (2,1) [circle,fill,red,scale=0.5] {};
\node (x2) at (1,0) [circle,fill,red,scale=0.5] {};
\node (x3) at (0,1) [circle,fill,red,scale=0.5] {};
\node (x4) at (-0.866,0.5) [circle,fill,red,scale=0.5] {};
\node (x5) at (-0.866,-0.5) [circle,draw,fill,red,scale=0.5] {};
\node (x6) at (0,-1) [circle,draw,fill,red,scale=0.5] {};

\draw (0.6,0.55) node {$\sigma$};

\end{tikzpicture}
\hspace{2cm}
\begin{tikzpicture}[xscale=1.6,yscale=1.2]
\path
node (y1) at (0,-0.5) [circle,draw,fill,red,scale=0.5] {}
node (y2) at (0,0) [circle,draw,fill,red,scale=0.5] {}
node (y3) at (1,1) [circle,draw,fill,red,scale=0.5] {}
node (y4) at (-1,1) [circle,draw,fill,red,scale=0.5] {}
node (y5) at (-1,-1) [circle,draw,fill,red,scale=0.5] {}
node (y6) at (1,-1) [circle,draw,fill,red,scale=0.5] {} ;

\draw[ultra thick,black] (y1) -- (y2) -- (y3) -- (y4) -- (y5) -- (y6) -- (y3);

\draw[fill,green!10] (y1) -- (-0.5,-0.25) -- (y2) -- (0.5,-0.25)  -- (y1);
\draw[fill,green!10] (y2) -- (0,0.5) -- (y3) -- (0.5,-0.25) -- (y2);
\draw[fill,green!10] (y3) -- (0,0.5) -- (y4) -- (0,1.4) -- (y3);
\draw[fill,green!10] (y4) -- (-0.5,-0.25) -- (y5) -- (-1.4,0) -- (y4);
\draw[fill,green!10] (y5) -- (0,-0.75) -- (y6) -- (0,-1.4) -- (y5);
\draw[fill,green!10] (y3) -- (1.4,0) -- (y6) -- (0.5,-0.25) -- (y3);

\draw[blue,dashed] (y1) -- (-0.5,-0.25) -- (y2) -- (0.5,-0.25)  -- (y1);
\draw[blue,dashed] (y2) -- (0,0.5) -- (y3) -- (0.5,-0.25);
\draw[blue,dashed]  (0,0.5) -- (y4) -- (0,1.4) -- (y3);
\draw[blue,dashed]  (y4) -- (-0.5,-0.25) -- (y5) -- (-1.4,0) -- (y4);
\draw[blue,dashed] (y5) -- (0,-0.75) -- (y6) -- (0,-1.4) -- (y5);
\draw[blue,dashed]  (y3) -- (1.4,0) -- (y6) -- (0.5,-0.25);

\draw (0.85,0) node {$\tau$};

\end{tikzpicture}\end{center}
\caption{Two homeomorphic sets $\sigma,\tau \in \PIC$ with  simple polygonal mosaics.}
\end{figure}

\begin{figure}[ht!]
\begin{center}
\begin{tikzpicture}[xscale=1.6,yscale=1.2]

\draw[fill,green!10]  (1,0) -- (1.25,0.5) -- (1.5,0.25) --  (1.25,-0.25) -- (1,0);
\draw[fill,green!10] (1.5,0.25) -- (1.5,1) -- (2,1) --  (2,0) -- (1.5,0.25);

\draw[fill,green!10] (1,1) -- (-0.866,1) -- (-0.866,0.5) -- (-1.5,0) -- (-0.866,-0.5) -- (-0.866,-1) -- (1,-1) -- (1,1);

\draw[ultra thick,black] (1,0) parabola (2,1);
 \draw[ultra thick,black] (-0.866,-0.5) arc (-150:150:1);
 \draw[ultra thick,black] (-0.866,-0.5) arc (-30:30:1);

\draw (1.75,0.8) node {\tiny{$1$}};
\draw (1.25,0.2) node {\tiny{$2$}};
\draw (0.8,0.8) node {\tiny{$3$}};
\draw (-0.5,0.7) node {\tiny{$4$}};
\draw (-0.85,0) node {\tiny{$5$}};
\draw (-0.5,-0.7) node {\tiny{$6$}};
\draw (0.8,-0.8) node {\tiny{$7$}};

\draw[blue,dashed] (1,0) -- (1.25,0.5) -- (1.5,0.25) --  (1.25,-0.25) -- (1,0);
\draw[blue,dashed] (1.5,0.25) -- (1.5,1) -- (2,1) --  (2,0) -- (1.5,0.25);
\draw[blue,dashed] (1,0) -- (0,0) -- (0,1) -- (1,1) -- (1,0);
\draw[blue,dashed] (0,1) -- (0,0) -- (-0.866,0.5) -- (-0.866,1) -- (0,1);
\draw[blue,dashed]  (-0.866,-0.5) -- (-1.5,0) -- (-0.866,0.5);
\draw[blue,dashed] (-0.866,-0.5) -- (0,0) -- (0,-1) -- (-0.866,-1) -- (-0.866,-0.5);
\draw[blue,dashed]  (0,-1) -- (1,-1) -- (1,0);

\node (x1) at (2,1) [circle,fill,red,scale=0.5] {};
\node (x2) at (1,0) [circle,fill,red,scale=0.5] {};
\node (x3) at (0,1) [circle,fill,red,scale=0.5] {};
\node (x4) at (-0.866,0.5) [circle,fill,red,scale=0.5] {};
\node (x5) at (-0.866,-0.5) [circle,draw,fill,red,scale=0.5] {};
\node (x6) at (0,-1) [circle,draw,fill,red,scale=0.5] {};
\node (x7) at (1.5,0.25) [circle,draw,fill,blue,scale=0.6] {};

\end{tikzpicture}
\hspace{2cm}
\begin{tikzpicture}[xscale=1.6,yscale=1.2]
\path
node (y1) at (0,-0.5) [circle,draw,fill,red,scale=0.5] {}
node (y2) at (0,0) [circle,draw,fill,red,scale=0.5] {}
node (y3) at (1,1) [circle,draw,fill,red,scale=0.5] {}
node (y4) at (-1,1) [circle,draw,fill,red,scale=0.5] {}
node (y5) at (-1,-1) [circle,draw,fill,red,scale=0.5] {}
node (y6) at (1,-1) [circle,draw,fill,red,scale=0.5] {}
node (y7) at (-1,0) [circle,draw,fill,blue,scale=0.6] {};

\draw[ultra thick,black] (y1) -- (y2) -- (y3) -- (y4) -- (y5) -- (y6) -- (y3);


\draw[fill,green!10] (y1) -- (-0.5,-0.25) -- (y2) -- (0.5,-0.25)  -- (y1);
\draw[fill,green!10] (y2) -- (0,0.5) -- (y3) -- (0.5,-0.25) -- (y2);
\draw[fill,green!10] (y3) -- (0,0.5) -- (y4) -- (0,1.4) -- (y3);
\draw[fill,green!10] (y4) -- (-0.75,0.375) -- (y7) -- (-1.2,0.5) -- (y4);
\draw[fill,green!10] (y7) -- (-0.75,-0.375) -- (y5) -- (-1.2,-0.5) -- (y7);
\draw[fill,green!10] (y5) -- (0,-0.75) -- (y6) -- (0,-1.4) -- (y5);
\draw[fill,green!10] (y3) -- (1.4,0) -- (y6) -- (0.5,-0.25) -- (y3);

\draw[blue,dashed] (y1) -- (-0.5,-0.25) -- (y2) -- (0.5,-0.25)  -- (y1);
\draw[blue,dashed] (y2) -- (0,0.5) -- (y3) -- (0.5,-0.25);
\draw[blue,dashed]  (0,0.5) -- (y4) -- (0,1.4) -- (y3);
\draw[blue,dashed]  (y4) -- (-0.75,0.375) -- (y7) -- (-1.2,0.5) -- (y4);
\draw[blue,dashed]  (y7) -- (-0.75,-0.375) -- (y5) -- (-1.2,-0.5) -- (y7);
\draw[blue,dashed] (y5) -- (0,-0.75) -- (y6) -- (0,-1.4) -- (y5);
\draw[blue,dashed]  (y3) -- (1.4,0) -- (y6) -- (0.5,-0.25);

\draw (0.1,-0.25) node {\tiny{$1$}};
\draw (0.45,0.25) node {\tiny{$2$}};
\draw (0,1.15) node {\tiny{$3$}};
\draw (-0.9,0.5) node {\tiny{$4$}};
\draw (-0.9,-0.5) node {\tiny{$5$}};
\draw (0,-1.2) node {\tiny{$6$}};
\draw (1.1,0) node {\tiny{$7$}};
\end{tikzpicture}\end{center}
\caption{The refined mosaics ${\widehat \calM}_\sigma$ and ${\widehat \calM}_\tau$ with the corresponding curves labelled so that the graphs ${\hat G}_\sigma$ and ${\hat G}_\tau$ are graph isomorphic.}
\end{figure}
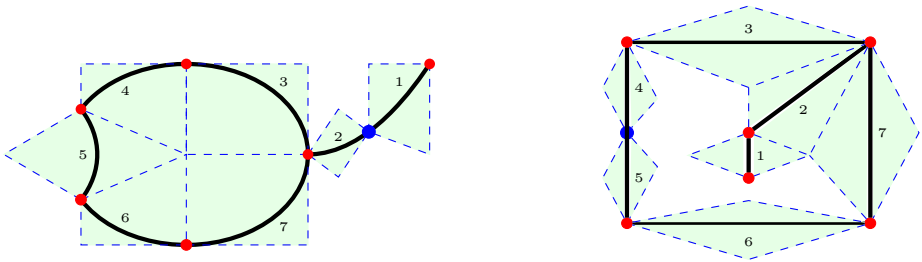

The arc-length parameterizations of the curves generate natural homeomorphisms $h_i: c_i \to c_i'$ for $i = 1,\dots,M$. If the directions of these parameterizations are chosen appropriately we can ensure that if $\vecx$ is an endpoint of both $c_i$ and $c_j$ then $h_i(\vecx) =   h_j(\vecx)$, and hence there exists a homeomorphism $h: \sigma \to \tau$ determined by $h(\vecz) = h_i(\vecz)$ for $z \in c_i$.

\section{The $\PIC$ norm}\label{S:PIC-norm}

Given any parametrized curve, there is a natural sense of variation of a function alone the curve, and this is usually easier to compute than our two dimensional variation. The main result in this section is that if $\sigma \in \PIC$ one can always define a norm using this `parametrized variation' which is equivalent to the $BV$ norm which we introduced earlier.

Let $C$ denote a finite length curve with parametrization $\gamma(t)$, $0 \le t \le L$ and endpoints $\vecx = \gamma(0)$ and $\vecy = \gamma(L)$.
Suppose that $f: C \to \mC$. The parameterized variation of $f$ on $C$ is
  \[ \pvar(f,C) = \var_{[0,L]} (f \circ \gamma) = \sup \sum_{i=1}^n |f(\gamma(t_i)) - f(\gamma(t_{i-1}))| \]
where the supremum is taken over all finite partitions $0 \le t_0 < t_1 < \dots < t_n \le L$ of the parameter set. Note that $\pvar(f,C)$ does not depend on the actual parameterization --- any continuous one-to-one function $\gamma$ mapping an interval to $C$ will do. For later we record  the following easy fact.

\begin{lemma}\label{pvar-join} Let $C$ be a finite length curve with parameterization $\gamma(t)$, $0 \le t \le L$. Suppose that $0 < L_0 < L$, that $C_1$ is the part of $C$ from $\gamma(0)$ to $\gamma(L_0)$ and that $C_2$ is the part of $C$ from $\gamma(L_0)$ to $\gamma(L)$. Then for any $f: C \to \mC$,
  \[ \pvar(f,C) = \pvar(f,C_1) + \pvar(f,C_2). \]
\end{lemma}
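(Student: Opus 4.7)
The statement is the classical additivity of one-dimensional variation transported via a parameterization, so the plan is to reduce it to the standard additivity of $\var_{[0,L]}$ for complex-valued functions on an interval and then verify the two inequalities by manipulating partitions.

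First I would observe that by the very definition given just above,
\[ \pvar(f,C) = \var_{[0,L]}(f\circ\gamma), \qquad \pvar(f,C_1) = \var_{[0,L_0]}(f\circ\gamma), \qquad \pvar(f,C_2) = \var_{[L_0,L]}(f\circ\gamma), \]
since $\gamma|_{[0,L_0]}$ parameterizes $C_1$ and $\gamma|_{[L_0,L]}$ parameterizes $C_2$, and $\pvar$ is independent of the choice of continuous one-to-one parameterization. Writing $g = f\circ\gamma$, it suffices to show
\[ \var_{[0,L]}(g) = \var_{[0,L_0]}(g) + \var_{[L_0,L]}(g). \]

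For the inequality $\ge$, I would take arbitrary partitions $0 = s_0 < s_1 < \dots < s_p = L_0$ and $L_0 = u_0 < u_1 < \dots < u_q = L$, and concatenate them into a single partition $0 = s_0 < \dots < s_p = u_0 < \dots < u_q = L$ of $[0,L]$. The sum associated to this combined partition equals the sum of the two partial sums, so it is bounded above by $\var_{[0,L]}(g)$; taking the supremum over the two partitions separately yields the desired inequality.

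For the inequality $\le$, I would start with an arbitrary partition $0 = t_0 < t_1 < \dots < t_n = L$ of $[0,L]$ and insert $L_0$ if it is not already a node. If $t_{k-1} < L_0 < t_k$, then by the triangle inequality
\[ |g(t_k) - g(t_{k-1})| \le |g(t_k) - g(L_0)| + |g(L_0) - g(t_{k-1})|, \]
so the refined sum is at least as large. The refined partition splits at $L_0$ into a partition of $[0,L_0]$ and a partition of $[L_0,L]$, and the corresponding sum is the sum of the two partial sums, each bounded by the respective variation. Taking the supremum over partitions of $[0,L]$ gives $\var_{[0,L]}(g) \le \var_{[0,L_0]}(g) + \var_{[L_0,L]}(g)$, completing the proof.

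There is no real obstacle here: the only subtlety is the routine bookkeeping with the insertion of $L_0$ and the application of the triangle inequality, which is a textbook argument for functions of bounded variation on an interval.
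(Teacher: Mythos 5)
Your proof is correct and is exactly the standard argument the authors had in mind: the paper records this lemma as an ``easy fact'' and omits the proof entirely, so your reduction to $\var_{[0,L]}(f\circ\gamma)$ followed by the two partition-concatenation/refinement inequalities supplies precisely the missing textbook argument. The only cosmetic point is that the paper's definition allows partitions with $t_0>0$ or $t_n<L$, but since adding endpoints to a partition can only increase the associated sum, your restriction to partitions anchored at the endpoints computes the same supremum.
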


\begin{theorem}\label{equiv-C} Suppose that $C$ is a projectable convex finite length curve. Then for any $f: C \to \mC$,
    \[ \var(f,C) \le \pvar(f,C) \le 2 \var(f,C). \]
\end{theorem}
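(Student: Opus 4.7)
My plan is to prove the two inequalities separately, with projectability entering each direction through the fact that the orthogonal projection $\pi: C \to \ls[\vecx,\vecy]$ onto the chord is monotone.

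For $\var(f,C) \le \pvar(f,C)$, fix any list $S = [\vecx_0,\ldots,\vecx_n]$ in $C$ and write $\vecx_i = \gamma(t_i)$. The triangle inequality gives $|f(\vecx_i) - f(\vecx_{i-1})| \le \pvar(f,C_i)$, where $C_i \subseteq C$ is the subarc between $\vecx_{i-1}$ and $\vecx_i$, so $\cvar(f,S) \le \sum_{i=1}^n \pvar(f,C_i)$. Sorting the distinct parameters as $\tau_0 < \cdots < \tau_m$ partitions $C$ into subarcs $A_j = \gamma([\tau_{j-1},\tau_j])$, and Lemma~\ref{pvar-join}, applied repeatedly, rewrites the sum as $\sum_j m_j\,\pvar(f,A_j)$ where $m_j = \#\{i : A_j \subseteq C_i\}$. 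The crux is that $m_j \le \vf(S)$: take $\ell$ perpendicular to $\ls[\vecx,\vecy]$ through an interior point of $\pi(A_j)$; monotonicity of $\pi$ implies that the type~(1) crossing segments of $S$ on $\ell$ are precisely those $\ls[\vecx_{i-1},\vecx_i]$ with $A_j \subseteq C_i$, so $m_j = \vf(S,\ell) \le \vf(S)$. Therefore $\cvar(f,S) \le \vf(S)\,\pvar(f,C)$, and taking the supremum over $S$ yields the bound.

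For $\pvar(f,C) \le 2\var(f,C)$, given any partition $0 \le t_0 < \cdots < t_n \le L$, set $S = [\gamma(t_0),\ldots,\gamma(t_n)]$, so the partition sum equals $\cvar(f,S)$; it suffices to show $\vf(S) \le 2$. At each $\vecx_i$ the supporting line of $C$ also supports the finite set $\{\vecx_0,\ldots,\vecx_n\}$, hence its convex hull $K$; each $\vecx_i$ therefore lies on $\partial K$, and since the $t_i$ are increasing the $\vecx_i$ occur on $\partial K$ in cyclic order. Thus $\gamma_S$ traces all but one edge of the convex polygon $\partial K$, and a short case analysis over the three cases in Definition~\ref{crossing-defn} (depending on whether $\ell$ meets $\gamma_S$ transversally, at a vertex, or along an edge) confirms $\vf(S,\ell) \le 2$ for every line $\ell$. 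Hence $\cvar(f,S) \le 2\var(f,C)$, and the bound follows.

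The principal obstacle is proving monotonicity of $\pi$, which is precisely where projectability enters. If $\pi$ had a strict interior extremum at some $t^* \in (0,L)$, then $\gamma'(t^*)$ would be perpendicular to $\ls[\vecx,\vecy]$, making the tangent (and hence supporting) line of $C$ at $\gamma(t^*)$ perpendicular to the chord. Convexity would then force all of $C$, including both endpoints $\vecx$ and $\vecy$, onto one closed side of this perpendicular; but then $\pi(C)$ could not exhaust $\ls[\vecx,\vecy]$, contradicting projectability.
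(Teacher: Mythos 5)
Your proof is correct and follows essentially the same route as the paper's: the lower bound is obtained by sorting the parameters, decomposing into consecutive subarcs, counting multiplicities, and bounding the relevant multiplicity by $\vf(S)$ using a line perpendicular to the chord (the paper's ``vertical line'' after affine normalization), and the upper bound by the observation that a list taken in parameter order along a convex curve has $\vf(S) \le 2$. The only cosmetic difference is that the paper handles monotonicity of the projection implicitly by affinely normalizing $C$ to a graph $\{(t,g(t)) \st 0 \le t \le 1\}$, whereas you argue it directly from the supporting-line property.
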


\begin{proof}
To simplify the arguments, we shall take advantage of the affine invariance of these quantities. By taking an appropriate affine transformation of the plane we can assume that $C$ is of the form $\{\gamma(t) = (t,g(t)) \st 0 \le t \le 1\}$ for some continuous function $g: [0,1] \to \mR$.

Consider any ordered partition $\{t_i\}_{i=0}^n$ of the parameter set $[0,1]$ and let $S = [\vecx_0,\vecx_1,\dots,\vecx_n]$ where $\vecx_i = \gamma(t_i)$.
Since $C$ is convex, $1 \le \vf(S) \le 2$ and hence
   \[
   \sum_{i=1}^n |f(\gamma(t_i)) - f(\gamma(t_{i-1}))|
    \le 2\, \frac{\cvar(f,S)}{\vf(S)} \\
   \le 2 \var(f,C)
   \]
which proves the right hand inequality.

Suppose now that $S = [\vecx_0,\vecx_1,\dots,\vecx_n]$ is an arbitrary list of elements of $C$. The set of distinct points in the list $S$ can be labelled $\gamma(t_0),\gamma(t_1),\dots,\gamma(t_m)$ for some ordered set of parameters $t_0 < t_1 < \dots < t_m$ with $m \le n$.
 For $i = 1,2,\dots,m$ let $I_i = [t_{i-1},t_i]$.

 Our aim is to bound $\cvar(f,S)/\vf(S)$.
Consider the  term $|f(\vecx_j) - f(\vecx_{j-1})|$, where $\vecx_{j-1} = \gamma(t_{i_1})$ and $\vecx_j = \gamma(t_{i_2})$. Suppose first that $i_i < i_2$. Then by the triangle inequality
  \[ |f(\vecx_j) - f(\vecx_{j-1})|
  \le \sum_{i=i_1+1}^{i_2} |f(\gamma(t_i)) - f(\gamma(t_{i-1}))|
  \le \sum_{i=i_1+1}^{i_2} \var_{I_i} (f \circ \gamma) . \]
A similar argument applies if $i_1 > i_2$. Adding all the terms gives that
  \begin{equation}\label{sum-intervals}
  \sum_{j=1}^n |f(\vecx_j) - f(\vecx_{j-1})|
  \le \sum_{i=1}^m k_i \, \var_{I_i} (f \circ \gamma)
  \end{equation}
where $k_i$ is the number of values of $j$ that the interval $I_i$ lies between $\vecx_{j-1}$ and $\vecx_j$
(so $1 \le k_i \le m$). Let $k = \max\{k_1,k_2,\dots, k_m\}$ and choose a value $i_0$ so that $k = k_{i_0}$. (See Figure~\ref{c-curve-arg})

\begin{figure}[ht!]
\centering
 \begin{tikzpicture}[xscale=4,yscale=7]

 \draw[red,<->] (-0.2,0) -- (1.2,0);
 \draw[green,thick] (0.52,0) -- (0.52,0.37);
 \draw (0.52,0.34) node[right] {$\ell$};

 \draw[red] (0.1,-0.01) -- (0.1,0.01);

 \draw[red] (0.4,-0.01) -- (0.4,0.01);
 \draw[red] (0.25,0) node[below] {$I_1$};
 \draw[red] (0.7,-0.01) -- (0.7,0.01);
 \draw[red] (0.55,0) node[below] {$I_2$};
 \draw[red] (0.9,-0.01) -- (0.9,0.01);
 \draw[red] (0.8,0) node[below] {$I_3$};

 \draw[red] (0.1,0) node[above] {\tiny{$t_0$}};
 \draw[red] (0.4,0) node[above] {\tiny{$t_1$}};
 \draw[red] (0.7,0) node[above] {\tiny{$t_2$}};
 \draw[red] (0.9,0) node[above] {\tiny{$t_3$}};

  \draw[black, line width = 0.50mm]   plot[smooth,domain=0:1] (\x, {\x-\x^2});

  \draw[blue] (0.1,0.09) -- (0.9,0.09) -- (0.4,0.24) -- (0.1,0.09) -- (0.7,0.21) -- (0.4,0.24);
  \draw[blue,->]  (0.1,0.09) -- (0.5,0.09);
  \draw[blue,->] (0.9,0.09) -- (0.7,0.15);
  \draw[blue,->] (0.4,0.24) -- (0.25,0.165);
  \draw[blue,->] (0.1,0.09) -- (0.3,0.13);
  \draw[blue,->] (0.7,0.21) -- (0.55,0.225);

  \draw (0.1,0.09) node[left] {$\vecx_0 = \vecx_3$};
  \draw (0.1,0.09) node[circle, draw, fill=black,inner sep=0pt, minimum width=4pt] {};
   \draw (0.4,0.27) node[left] {$\vecx_2 = \vecx_5$};
   \draw (0.4,0.24) node[circle, draw, fill=black,inner sep=0pt, minimum width=4pt] {};
   \draw (0.7,0.22) node[right] {$\vecx_4$};
   \draw (0.7,0.21) node[circle, draw, fill=black,inner sep=0pt, minimum width=4pt] {};
   \draw (0.9,0.09) node[right] {$\vecx_1$};
   \draw (0.9,0.09) node[circle, draw, fill=black,inner sep=0pt, minimum width=4pt] {};

 \end{tikzpicture}
\caption{Example of the quantities in the proof of Theorem~\ref{equiv-C}. In this example $k = 3$ and $i_0 = 2$.}\label{c-curve-arg}
\end{figure}
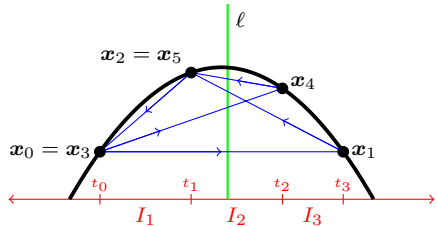

If $t$ is any number in the interior of $I_{i_0}$ and $\ell$ is the vertical line through $t$, then $S$ necessarily has $k$ crossing segments on $\ell$, and hence $\vf(S) \ge k$.

But (\ref{sum-intervals}), and the additivity of variation over contiguous intervals, implies that
  \begin{equation}\label{cvar-ub} \cvar(f,S)
  \le k \sum_{i=1}^m \var_{I_i} (f \circ \gamma)
  = k \var_{[t_0,t_m]} (f \circ \gamma)
  \le k \var_{[0,1]} (f \circ \gamma)
   = k \pvar(f,C).
  \end{equation}
Combining these shows that
   \[ \frac{\cvar(f,S)}{\vf(S)} \le \frac{k \pvar(f,C)}{k} = \pvar(f,C)\]
and hence $\var(f,C) \le \pvar(f,C)$.
\end{proof}

\begin{definition}\label{PIC-norm} Suppose that $\sigma = \cup_{i=1}^M c_i \in \PIC$ with simple polygonal mosaic $\calM$ such that $c_i$ is projectable for each $i$. For $f: \sigma \to \mC$ let
  \[ \norm{f}_{\PIC(\sigma)} = \norm{f}_\infty + \sum_{i=1}^M \pvar(f,c_i). \]
We will denote the set of all functions $f$ such that $\norm{f}_{\PIC(\sigma)}< \infty$ as $\PIC(\sigma)$.
\end{definition}

At first glance it may appear that we should denote this quantity as $\norm{f}_{\PIC(\sigma,\calM)}$ since it depends on the decomposition of $\sigma$ into convex curves. As was noted in Section~\ref{graph-isom} however, given any two decompositions of $\sigma$ with associated polygonal mosaics $\calM_1$ and $\calM_2$, there is a common finer decomposition into smaller curves with mosaic $\calM_{12}$. We may now apply Lemma~\ref{pvar-join} to see that $\norm{f}_{\PIC(\sigma),\calM_1} = \norm{f}_{\PIC(\sigma),\calM_{12}}  = \norm{f}_{\PIC(\sigma),\calM_2} $. Consequently we may safely omit mention of the mosaic and write $\norm{f}_{\PIC(\sigma)}$.

Since $\norm{\cdot}_\infty$ and each of the terms $\pvar(\cdot,c_i)$ have the appropriate homogeneity and subadditivity properties it is easy to verify that $\norm{\cdot}_{\PIC(\sigma)}$ is a norm. Our first aim is to show that it is equivalent to the $BV(\sigma)$ norm.

Note that by Theorem~\ref{equiv-C} and the fact that $\var(f,c_i) \le \var(f,\sigma)$ for all $i$,
  \[
   \norm{f}_{\PIC(\sigma)}
     \le \norm{f}_\infty + \sum_{i=1}^M 2 \var(f,c_i)
     \le \norm{f}_\infty + 2M \var(f,\sigma)
     \le 2M \norm{f}_{BV(\sigma)}.
  \]
Proving an inequality in the reverse direction is more difficult, and indeed the constants involved depend on geometric properties of the polygonal mosaic.

\begin{lemma} Suppose that $\sigma \in \PIC$. Then there exists a constant $K_\sigma$ such that
    \[ \norm{f}_{BV(\sigma)}
   \leq K_\sigma \norm{f}_{\PIC(\sigma)} \]
for all $f\in \PIC(\sigma).$
\end{lemma}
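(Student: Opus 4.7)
The plan is to prove the equivalent estimate
\[ \var(f,\sigma) \le C_\sigma \sum_{i=1}^M \pvar(f,c_i), \]
which together with the trivial inequality $\norm{f}_\infty \le \norm{f}_{\PIC(\sigma)}$ gives the lemma with $K_\sigma = 1 + C_\sigma$. Fixing any list $S = [\vecx_0,\dots,\vecx_n]$ of elements of $\sigma$, it suffices to produce a constant $C_\sigma$ depending on $\sigma$ and the chosen simple polygonal mosaic $\calM$ such that
\[ \cvar(f,S) \le C_\sigma\, \vf(S) \sum_{i=1}^M \pvar(f,c_i). \]

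I would first bound $\cvar(f,S)$ by a weighted sum of the $\pvar(f,c_i)$. Because $\sigma$ is connected and $\calM$ is simple, the graph $G_{\sigma,\calM}$ is a connected simple graph, so for each consecutive pair $(\vecx_{j-1},\vecx_j)$ I choose a \emph{simple} graph path $p_j \subseteq \sigma$ from $\vecx_{j-1}$ to $\vecx_j$ --- simple meaning it uses each curve $c_i$ at most once. Approximating $p_j$ by a finite sequence of points on $\sigma$ and applying the triangle inequality, combined with iterated uses of Lemma~\ref{pvar-join} along the sub-arcs of $p_j$, gives
\[ |f(\vecx_j) - f(\vecx_{j-1})| \le \pvar(f,p_j) \le \sum_{i \in A_j} \pvar(f,c_i), \]
where $A_j \subseteq \{1,\dots,M\}$ indexes the curves traversed by $p_j$. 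Summing over $j$ yields
\[ \cvar(f,S) \le \sum_{i=1}^M N_i\, \pvar(f,c_i), \qquad N_i := \left|\{j \st i \in A_j\}\right|. \]

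The remaining and principal step is to show that $N_i \le C_\sigma\, \vf(S)$ for every $i$. When the path $p_j$ is forced to traverse $c_i$, the endpoints $\vecx_{j-1},\vecx_j$ must lie in different components of $\sigma \setminus c_i^\circ$, and I would use the projectability and convexity of $c_i$ inside the convex polygon $P_i$ to associate to $c_i$ a finite family $\mathcal{L}_i$ of auxiliary lines, built from the chord $\ls[\vecv_{i,1},\vecv_{i,2}]$ and the sides of $P_i$, whose cardinality is controlled by $S(\calM)$. The aim would be that for each such $j$ the straight segment $\ls[\vecx_{j-1},\vecx_j]$ crosses at least one line in $\mathcal{L}_i$, yielding $N_i \le |\mathcal{L}_i|\, \vf(S) \le C_\sigma\, \vf(S)$. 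The main obstacle is exactly this geometric step: the two components of $\sigma \setminus c_i^\circ$ are not in general linearly separable in the plane, so no single line associated with $c_i$ can witness the bound. Overcoming this requires invoking the planarity of the embedding (Theorem~\ref{graph-topol hom}) and the rotation system of $G_{\sigma,\calM}$ so that the paths $p_j$ can be chosen coherently with the planar embedding, and then exploiting the convex polygonal structure of $\calM$ to localise the graph-theoretic separation across $c_i$ to a geometric crossing of one of finitely many lines. Granting this, taking the supremum over $S$ delivers $\var(f,\sigma) \le C_\sigma \sum_i \pvar(f,c_i)$ and hence the lemma, with $C_\sigma$ depending only on $M$ and $S(\calM)$.
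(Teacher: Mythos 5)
Your reduction of the problem to the estimate $N_i \le C_\sigma \vf(S)$ is where the argument breaks down, and you have correctly identified but not resolved the obstruction. First, the premise of your geometric step is wrong in general: a path $p_j$ from $\vecx_{j-1}$ to $\vecx_j$ is only \emph{forced} to traverse $c_i$ when $c_i$ is a bridge of $G_{\sigma,\calM}$; for an edge lying on a cycle (e.g.\ any arc of the circle $\mT$, which is the motivating example of the paper), $\sigma \setminus c_i^\circ$ is still connected, so no separation statement about $\vecx_{j-1}$ and $\vecx_j$ is available, and $N_i$ is an artefact of your choice of paths rather than something controlled by $\vf(S)$. Second, even in the bridge case you concede that the two components of $\sigma\setminus c_i^\circ$ need not be separated by any of finitely many lines, and the proposed repair via planarity and the rotation system of the embedding is only a gesture: nothing in the convexity of the mosaic polygons gives you a finite family $\mathcal{L}_i$ of lines such that every segment joining the two sides crosses one of them (the two sides can interleave arbitrarily in the plane, as in Figure~\ref{bad-bv-ex}). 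Since the entire lemma rests on this unproved claim, the proposal as written is not a proof.

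The paper avoids this difficulty by never routing $|f(\vecx_j)-f(\vecx_{j-1})|$ through graph paths. It argues by induction on the component curves, writing $\sigma_{k+1}=\sigma_k\cup c_{k+1}$ and splitting the segments of a list $S$ into those with both endpoints in $\sigma_k$, both in $c_{k+1}$, and the mixed ones. The first two classes are handled by sublists (using $\vf(S_j)\le\vf(S)$) and the inductive hypothesis together with Theorem~\ref{equiv-C}. The crucial point is that the mixed segments are \emph{not} estimated by variations along connecting paths but crudely by $2\norm{f}_\infty$ each; their number is at most $S(\calM)\,\vf(S)$ because each such segment has one endpoint in the interior of the mosaic polygon $P_{k+1}$ and the other outside it, hence crosses one of its at most $S(\calM)$ boundary lines. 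This is exactly the ``finitely many separating lines'' you were seeking, but it separates $c_{k+1}^\circ$ from the rest of $\sigma$ (which the mosaic guarantees) rather than separating the two sides of $\sigma$ cut along $c_i$ (which nothing guarantees), and the resulting loss is absorbed by the $\norm{f}_\infty$ term of the $\PIC$ norm. If you want to salvage your approach, you would need to replace the path-counting bound by an argument of this type.
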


\begin{proof} Suppose that $\sigma = \cup_{i=1}^M c_i$ is a decomposition of $\sigma$ into projectable convex curves coming from a simple polygonal mosaic $\calM$.
For $k=1,2,\dots,M$, let $\sigma_{k}=\cup_{j=1}^{k}c_{j}$. We shall assume the curves $c_i$ have been ordered so that each set $\sigma_k$ is connected.

Suppose that $f: \sigma \to \mC$. We shall proceed by induction  to show that
  \begin{equation}\label{bv-pic-induction}
    \norm{f}_{\BV(\sigma_{m})}
    \leq \bigl(m+2(m-1)S(\calM)\bigr)  \norm{f}_{\PIC(\sigma_{m})}
  \end{equation}
for $m$ from $1$ to $M$. (Recall that $S(\calM)$ was defined in Definition~\ref{S(M)-def}.)

Since $\sigma_1 = c_1$ is a projectable convex curve, it follows from Theorem~\ref{equiv-C} that
 \[ \norm{f}_{\BV(\sigma_1)}
  = \sup_{\vecz \in \sigma_1} | f(\vecz)| + \var(f,\sigma_1)
  \le \norm{f}_{\PIC(\sigma_1)} \]
so (\ref{bv-pic-induction}) holds when $m = 1$.

Suppose now that $1\leq k < M$, and that (\ref{bv-pic-induction}) holds if $m = k$. Let  $S = [\vecz_0,\vecz_1,\dots,\vecz_n]$  be a list of points in $\sigma_{k+1} = \sigma_k \cup c_{k+1} $. Denote the endpoints of $c_{k+1} $ by $\vecx$ and $\vecy$. For the moment assume that both $\vecx$ and $\vecy$ are elements of $\sigma_k$.

\begin{figure}[ht!]
\begin{center}
\begin{tikzpicture}[scale=0.2]

\draw[ultra thick,black] (-3,0) parabola (3,-6)parabola (12,-8) parabola(17,4)parabola (12,12)  parabola (-3,0) -- (-10,0.5) -- (-9.5,-3.5) -- (-3,0);

\draw[ultra thick,red] (-3,0) parabola (17,4);
\draw[black] (-3,-0.1) node[below] {$\vecx$};
\draw[black] (-3,0) node[circle, draw, fill=black!50,inner sep=0pt, minimum width=4pt] {};
\draw[black] (17.1,4) node[right] {$\vecy$};
\draw[black] (17,4) node[circle, draw, fill=black!50,inner sep=0pt, minimum width=4pt] {};

\draw[scale=1,black] (9,1.2) node[below] {$c_{k+1}$};
\draw[black] (-11,-0.5) node[below] {$\sigma_{k}$};

\end{tikzpicture}
\caption{$\sigma_{k+1}=\sigma_{k} \cup c_{k+1}$. }
\end{center}
\end{figure}
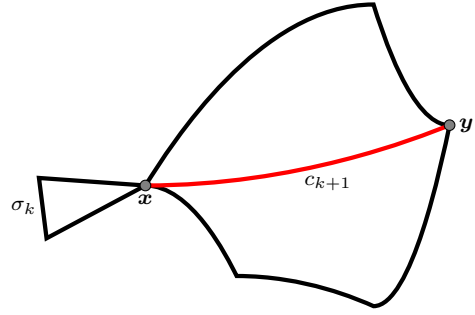
\

For $j = 1,\dots,n$, let $\ell_j = \ls[\vecz_{j-1},\vecz_j]$. Define subsets $I_1,I_2,I_3 \subseteq \{1,2,\dots,n\}$ by
   \begin{align*}
    I_1 &= \{ j \st \vecz_j,\vecz_{j-1} \in \sigma_{k}\}, \\
    I_2 &= \{ j \st \vecz_j,\vecz_{j-1} \in c_{k+1}\}, \\
    I_3 &= \{1,2,\dots,n\} \setminus (I_1 \cup I_2).
  \end{align*}

\begin{figure}[ht!]
\begin{center}
\begin{tikzpicture}[scale =0.2]
\draw[ultra thick,black] (-3,0) parabola (3,-6)parabola (12,-8) parabola(17,4)parabola (12,12)  parabola (-3,0) -- (-10,0.5) -- (-9.5,-3.5) -- (-3,0);

\draw[ultra thick,red] (-3,0) parabola (17,4);
\draw[black] (-3,0) node[below] {$\vecx$};
\draw[black] (-3,0) node[circle, draw, fill=black!50,inner sep=0pt, minimum width=4pt] {};
\draw[ultra thick,blue] (-6,0.4)--(0,4.2)--(2,0.3)--(1,5.8)--(13,9)--(14,-6)--(3,-6) --(5,0.5)--(9,1.3)--(5,9.5)--(17,4);

\draw[thick, black] (-11,-0.5) node[below] {$\sigma_{k}$};

\draw[black] (-6,0.4) node[above] {$\vecz_0$};
\draw[black] (-6,0.4) node[circle, draw, fill=black!50,inner sep=0pt, minimum width=4pt] {};
\draw[black] (0,4.2) node[left] {$\vecz_{1}$};
\draw[black] (0,4.2) node[circle, draw, fill=black!50,inner sep=0pt, minimum width=4pt] {};
\draw[black] (2,0.3) node[below] {$\vecz_2$};
\draw[black] (2,0.3) node[circle, draw, fill=black!50,inner sep=0pt, minimum width=4pt] {};
\draw[black] (1,5.8) node[above] {$\vecz_3$};
\draw[black] (1,5.8) node[circle, draw, fill=black!50,inner sep=0pt, minimum width=4pt] {};
\draw[black] (13,9) node[right] {$\vecz_4$};
\draw[black] (13,9) node[circle, draw, fill=black!50,inner sep=0pt, minimum width=4pt] {};
\draw[black] (5,0.5) node[above] {$\vecz_7$};
\draw[black] (5,0.5) node[circle, draw, fill=black!50,inner sep=0pt, minimum width=4pt] {};
\draw[black] (9,1.3) node[below] {$\vecz_8 $};
\draw[black] (9,1.3) node[circle, draw, fill=black!50,inner sep=0pt, minimum width=4pt] {};
\draw[black] (17,4) node[right] {$\vecz_{10}=\vecy$};
\draw[black] (17,4) node[circle, draw, fill=black!50,inner sep=0pt, minimum width=4pt] {};
\draw[black] (14,-6) node[right] {$\vecz_5$};
\draw[black] (14,-6) node[circle, draw, fill=black!50,inner sep=0pt, minimum width=4pt] {};
\draw[black] (3,-6) node[below] {$\vecz_6$};
\draw[black] (3,-6) node[circle, draw, fill=black!50,inner sep=0pt, minimum width=4pt] {};
\draw[black] (5,9.5) node[above] {$\vecz_{9}$};
\draw[black] (5,9.5) node[circle, draw, fill=black!50,inner sep=0pt, minimum width=4pt] {};

\end{tikzpicture}
\caption{In this example $I_1 = \{1,4,5,6,10\}$, $I_2 = \{8\}$ and $I_3 = \{2,3,7,9\}$. The sublist $S_1$ is $[\vecz_0,\vecz_1,\vecz_3,\vecz_4,\vecz_5,\vecz_6,\vecz_9,\vecz_{10}]$ and the sublist $S_2$ is $[\vecz_7,\vecz_8]$. }
\end{center}
\end{figure}
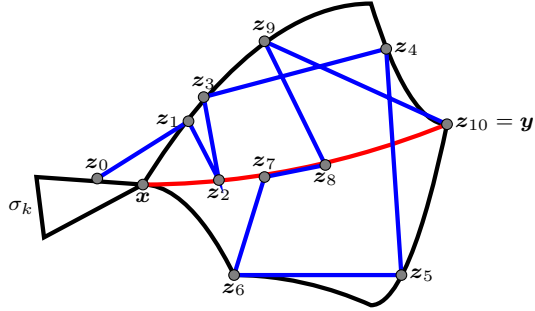

Note that if $j \in I_3$ then one end of $\ell_j$ must line in $\sigma_k \setminus c_{k+1}$ and the other must lie in $c_{k+1} \setminus \sigma_k$.

Noting that $I_1 \cap I_2$ may be nonempty,
  \begin{equation*}\label{eq1}
    \cvar(f,S)
    \le \sum_{i = 1}^3 \sum_{j \in I_i} |f(\vecz_j) - f(\vecz_{j-1})|.
  \end{equation*}
Form the sublist $S_1$ of $S$ by including all points which are endpoints of line segments $\ell_j$ with $j \in I_1$ (and dropping points if necessary to ensure that consecutive points are distinct). By [DL2,Proposition 3.5] and \cite{AD2}, $\vf(S_1) \leq \vf(S)$, and so we have
  \begin{equation}\label{eq2}
    \frac{\sum_{j \in I_1} |f(\vecz_j) - f(\vecz_{j-1})|}{\vf(S)}
     \le \frac{\cvar(f,S_1)}{\vf(S_1)}
     \le \var(f,\sigma_{k}).
  \end{equation}
Similarly, if $S_2$ is the sublist of $S$ including all points which are endpoints of line segments $\ell_j$ with $j \in I_2$ then
  \begin{equation}\label{eq3}
    \frac{\sum_{j \in I_2} |f(\vecz_j) - f(\vecz_{j-1})|}{\vf(S)}
      \le \frac{\cvar(f,S_2)}{\vf(S_2)}
      \le \var(f,c_{k+1}).
  \end{equation}

Consider now the polygon $P_{k+1}$ which contains $c_{k+1}$. If $j \in I_3$, then $\ell_j$ is a crossing segment on at least one of the lines which form the boundary of $P_{k+1}$.  In particular, at least one of the $S(P_{k+1})$ lines must have at least $|I_3|/S(P_{k+1})$
crossing segments, and hence $\vf(S) \ge |I_3|/S(P_{k+1})$. By a simple triangle inequality estimate
\begin{equation}\label{eq4}
\frac{\sum_{j \in I_3} |f(\vecz_{j} - f(\vecz_{j-1})|}{\vf(S)}
    \le \frac{2 |I_3| \norm{f}_{\infty,\sigma_{k+1}} }{|I_3|/S(P_{k+1})}
   = 2 S(P_{k+1}) \norm{f}_{\infty,\sigma_{k+1}}
\end{equation}

Combining the three estimates (\ref{eq2}), (\ref{eq3}) and (\ref{eq4}) we see that
\[\var(f,\sigma_{k+1}) \le \var(f,\sigma_{k}) + \var(f,c_{k+1}) + 2 S(\calM) \norm{f}_{\infty,\sigma_{k+1}}\]
and hence
\begin{align*}
\norm{f}_{BV(\sigma_{k+1})}
  & = \norm{f}_{\infty,\sigma_{k+1}} +  \var(f,\sigma_{k+1}) \\
  &\leq \norm{f}_{\infty,\sigma_{k}} + \norm{f}_{\infty,c_{k+1}}
     + \var(f,\sigma_{k}) + \var(f,c_{k+1})
     + 2 S(\calM)   \norm{f}_{\infty,\sigma_{k+1}} \\
  & \leq (\bigl(k+2(k-1)S(\calM)\bigr) \Bigl(\norm{f}_{\infty,\sigma_{k}}
         + \sum_{j=1}^{n} \var(f,c_{j}) \Bigr)\\
  & \qquad
     + (1+2 S(\calM)) \norm{f}_{\infty,\sigma_{k+1}} + \var(f,c_{k+1}) \\
  &\leq \bigl(k+2(k-1)S(\calM)+1+2 S(\calM)\bigr) \norm{f}_{PIC(\sigma_{k+1})} \\
 &= \bigl(k+1+2kS(\calM)\bigr) \norm{f}_{PIC(\sigma_{k+1})}
\end{align*}
and so (\ref{bv-pic-induction}) holds when $m=k+1$.
\end{proof}

The constant $K_\sigma = M+2(M-1)S(\calM)$ obtained in the above proof depends on the particular polygonal mosaic used to decompose $\sigma$. The exact way in which the best constant depends on $\sigma$ is not known, but examples such as the one in Figure~\ref{bad-bv-ex} show that there is no bound which is independent of the number of component curves.

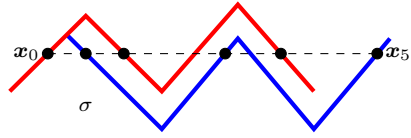
\begin{figure}
\begin{center}
\begin{tikzpicture}

\draw[blue,ultra thick] (0.75,0.75) -- (2,-0.5) -- (3,0.7) -- (4,-0.5) -- (5,0.7);
\draw[red,ultra thick] (0,0) -- (1,1) -- (2,0) -- (3,1.15) -- (4,0);
\draw[black,dashed] (0.5,0.5) -- (4.8333,0.5);
\filldraw[black] (0.5,0.5) circle (2pt);
\draw (0.5,0.5) node[left] {$\vecx_0$};
\filldraw[black] (1.0,0.5) circle (2pt);
\filldraw[black] (1.5,0.5) circle (2pt);
\filldraw[black] (2.8333,0.5) circle (2pt);
\filldraw[black] (3.566,0.5) circle (2pt);
\filldraw[black] (4.8333,0.5) circle (2pt) ;
\draw (4.8333,0.5) node[right] {$\vecx_5$};

\draw (1,-0.2) node {$\sigma$};

\end{tikzpicture}
\caption{Suppose that $f:\sigma \to \mC$ is the characteristic function of the upper (red) curve. Then $\norm{f}_{\PIC(\sigma)} = 2$, while by considering the list $S = [\vecx_0,\dots,\vecx_5]$, one can see that $\norm{f}_{BV(\sigma)} \ge 6$. By adding additional components to $\sigma$ one can clearly make the $BV$ norm as large as one likes while not increasing the $\PIC$ norm.}\label{bad-bv-ex}
\end{center}
\end{figure}

\begin{corollary}\label{BV-PIC-equiv} Suppose that $\sigma \in \PIC$.
Then $\norm{\cdot}_{\PIC(\sigma)}$ is equivalent to $\norm{\cdot}_{BV(\sigma)}$.
\end{corollary}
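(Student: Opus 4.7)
The plan is simply to assemble the two one-sided bounds that have already been established in this section. By Theorem~\ref{good-mosaic}, for any $\sigma \in \PIC$ we may choose a simple polygonal mosaic $\calM = \{P_i\}_{i=1}^M$ such that each component $c_i = \sigma \cap P_i$ is a projectable convex curve; the discussion following Definition~\ref{PIC-norm} shows that $\norm{\cdot}_{\PIC(\sigma)}$ does not depend on this choice, so we may fix such a decomposition once and for all.

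For the upper bound on $\norm{f}_{\PIC(\sigma)}$, I would just quote the computation presented immediately after Definition~\ref{PIC-norm}: applying Theorem~\ref{equiv-C} to each projectable curve $c_i$ and using $\var(f,c_i) \le \var(f,\sigma)$ gives
\[ \norm{f}_{\PIC(\sigma)} \le \norm{f}_\infty + 2 \sum_{i=1}^M \var(f,c_i) \le 2M \norm{f}_{\BV(\sigma)}. \]
For the reverse inequality, the preceding lemma supplies a constant $K_\sigma = M + 2(M-1) S(\calM)$ with
\[ \norm{f}_{\BV(\sigma)} \le K_\sigma \norm{f}_{\PIC(\sigma)}. \]

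Putting these together yields the two-sided estimate, which is exactly the equivalence of norms claimed. The hard work has already been done in Theorem~\ref{equiv-C} (the per-curve equivalence for projectable convex curves) and in the inductive gluing argument of the previous lemma; the corollary itself is essentially bookkeeping, so I do not anticipate any further obstacle.
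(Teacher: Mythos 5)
Your proposal is correct and matches the paper exactly: the corollary is obtained by combining the inequality $\norm{f}_{\PIC(\sigma)} \le 2M\norm{f}_{BV(\sigma)}$ displayed after Definition~\ref{PIC-norm} with the reverse bound $\norm{f}_{BV(\sigma)} \le K_\sigma \norm{f}_{\PIC(\sigma)}$ from the preceding lemma. No further argument is needed.
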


\begin{corollary}\label{BV-isom-PIC}
Suppose that $\sigma,\tau \in \PIC$. If $\sigma$ is homeomorphic to $\tau$ then $BV(\sigma)$ is isomorphic to $BV(\tau)$.
\end{corollary}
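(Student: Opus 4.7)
The plan is to invoke the homeomorphism $h : \sigma \to \tau$ constructed at the end of Section~\ref{graph-isom}, together with the norm equivalence of Corollary~\ref{BV-PIC-equiv}. Recall that after refining to simple polygonal mosaics ${\widehat \calM}_\sigma = \{P_i\}_{i=1}^M$ and ${\widehat \calM}_\tau = \{P'_i\}_{i=1}^M$ whose associated graphs are graph isomorphic, one obtains projectable component curves $c_i = \sigma \cap P_i$ and $c'_i = \tau \cap P'_i$, and $h$ is defined by piecing together the arc-length homeomorphisms $h_i : c_i \to c'_i$, with orientations chosen so that they agree at shared endpoints.

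I would then define $\Phi_h : BV(\sigma) \to BV(\tau)$ by $\Phi_h(f) = f \circ h^{-1}$. Because $h$ is a bijection and both algebras have pointwise operations, $\Phi_h$ is an algebra homomorphism with algebraic two-sided inverse $\Phi_{h^{-1}}$. The content is therefore reduced to showing that $\Phi_h$ is bounded and maps into $BV(\tau)$; the same argument will then apply to $\Phi_{h^{-1}}$.

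The key step is that $\Phi_h$ is an \emph{isometry} for the $\PIC$ norms. Trivially $\norm{\Phi_h(f)}_\infty = \norm{f}_\infty$. Moreover, if $\gamma_i : [0, L_i] \to c_i$ is the arc-length parameterization underlying $h_i$, then $h_i \circ \gamma_i$ is a continuous bijection from $[0, L_i]$ onto $c'_i$, so since $\pvar$ is independent of the chosen parameterization,
\[ \pvar(\Phi_h(f), c'_i) = \var_{[0, L_i]}\bigl(f \circ h^{-1} \circ h_i \circ \gamma_i\bigr) = \var_{[0, L_i]}(f \circ \gamma_i) = \pvar(f, c_i). \]
Summing over $i$ yields $\norm{\Phi_h(f)}_{\PIC(\tau)} = \norm{f}_{\PIC(\sigma)}$.

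Combining this identity with Corollary~\ref{BV-PIC-equiv}, there are constants $A_\sigma, A_\tau > 0$ for which
\[ \norm{\Phi_h(f)}_{BV(\tau)} \le A_\tau \norm{\Phi_h(f)}_{\PIC(\tau)} = A_\tau \norm{f}_{\PIC(\sigma)} \le A_\sigma A_\tau \norm{f}_{BV(\sigma)}, \]
so $\Phi_h$ maps $BV(\sigma)$ boundedly into $BV(\tau)$; the symmetric argument bounds $\Phi_{h^{-1}}$, giving a Banach algebra isomorphism. There is no real obstacle once all the preparatory apparatus is in place: the whole point of introducing the $\PIC$ norm was to produce an intermediate quantity that is transparently preserved by the component-wise homeomorphism $h$, and the corollary drops out immediately.
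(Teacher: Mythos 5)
Your proposal is correct and follows the paper's own argument essentially verbatim: both use the piecewise arc-length homeomorphism $h$ from Section~\ref{graph-isom}, observe that $\Phi_h(f)=f\circ h^{-1}$ preserves each $\pvar(\cdot,c_i)$ and hence the $\PIC$ norm, and then transfer this to the $BV$ norms via Corollary~\ref{BV-PIC-equiv}. The only difference is that you spell out the parameterization-independence of $\pvar$ explicitly, which the paper leaves implicit.
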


\begin{proof}
Using the construction in Section~\ref{graph-isom}, we can write $\sigma = \cup_{i=1}^n c_i$ and $\tau = \cup_{i=1}^n c_i'$ with a homeomorphism $h:\sigma \to \tau$ which maps $c_i \to c_i'$ via the arc-length parameterization. For $f: \sigma \to \mathbb{C}$ define
  \[ \Phi(f)(\vecz) = f(h^{-1}(\vecz)), \qquad \vecz \in \tau. \]
Then $\pvar(f,c_i) = \pvar(\Phi(f),c_i')$ and so $\norm{f}_{\PIC(\sigma)} = \norm{\Phi(f)}_{\PIC(\tau)}$. By Corollary~\ref{BV-PIC-equiv} this means that $\Phi$ is a continuous map from $BV(\sigma)$ to $BV(\tau)$. Since $\Phi^{-1}(g) = g \circ h$, it follows that $\Phi$ is actually an isomorphism.
\end{proof}

\section{Absolutely continuous functions}\label{S:AC-functions}

The isomorphism $\Phi: BV(\sigma) \to BV(\tau)$ defined in the proof of Corollary~\ref{BV-isom-PIC} is of the form $\Phi(f) = f \circ h^{-1}$ for a particular homeomorphism $h: \sigma \to \tau$. In fact the choice of homeomorphism here is not particularly important. One just wants a function which maps each component curve $c_i \subseteq \sigma$ continuously onto the corresponding curve $c_i' \subseteq \tau$.
However, a badly chosen homeomorphism may not send $AC(\sigma)$ functions to $AC(\tau)$ functions. Our aim now is to show that one may choose a homeomorphism which preserves these subalgebras.

Let $c$ be the graph of a differentiable convex function $k: [0,1] \to \mC$. For $g: [0,1] \to \mC$, define $\Psi(g): c \to \mC$ by $\Psi(g)(x,k(x)) = g(x)$. Then, by Theorem~\ref{equiv-C},
  \begin{align*}
  \norm{g}_{BV[0,1]}
    & = \norm{g}_\infty+ \var(g,[0,1])\\
    &= \norm{\Psi(g)}_\infty+ \pvar(\Psi(g),c) \\
    &   \le 2\left(\norm{\Psi(g)}_\infty + \var(\Psi(g),c) \right) \\
    &  = 2 \norm{\Psi(g)}_{BV(c)} \\
    &  \le 2 \left(\norm{g}_\infty + \pvar(\Psi(g),c)\right) \\
    &  = 2 \norm{g}_{BV[0,1]}.
  \end{align*}
So $BV(c)$ is isomorphic to $BV[0,1]$. The more delicate thing is to check that $\Psi$ preserves absolute continuity.

\begin{proposition}\label{real-proj}
With $\Phi$ as above, $f \in AC[0,1]$ if and only if $\Psi(f) \in AC(c)$.
\end{proposition}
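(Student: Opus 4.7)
The strategy is to exploit that $\Psi \colon BV[0,1] \to BV(c)$ and its inverse are bi-continuous (established by the inequality chain immediately preceding the statement), together with the definition of $AC(\sigma)$ as the $BV(\sigma)$-closure of $\calP_2(\sigma)$. The whole argument therefore reduces to tracking where $\Psi$ and $\Psi^{-1}$ send the polynomial generators.

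For the forward direction, I would observe that any one-variable polynomial $q$ can be viewed as the restriction to $[0,1]$ of the two-variable polynomial $P(x,y) := q(x) \in \calP_2$, and then $\Psi(q)(x, k(x)) = q(x) = P(x, k(x))$ identifies $\Psi(q)$ with the restriction of $P$ to $c$. Hence $\Psi$ carries $\calP_2([0,1])$ into $\calP_2(c) \subseteq AC(c)$. Given $f \in AC[0,1]$ and polynomials $q_n \to f$ in $BV[0,1]$, continuity of $\Psi$ yields $\Psi(q_n) \to \Psi(f)$ in $BV(c)$ with each $\Psi(q_n) \in \calP_2(c)$, so $\Psi(f) \in AC(c)$.

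For the reverse direction the subtlety is that if $P(x,y) \in \calP_2$ then $\Psi^{-1}(P\vert_c)(x) = P(x, k(x))$ is generally not itself a polynomial in $x$, so the definition of $AC[0,1]$ cannot be invoked directly; the main task is to verify that every such composition nonetheless lies in $AC[0,1]$. I would split this into two facts: (i) the continuous convex function $k$ is absolutely continuous on $[0,1]$ (its derivative is monotone, hence integrable, and $k$ coincides with the indefinite integral of $k'$), and (ii) $AC[0,1]$ is an algebra under pointwise multiplication of bounded functions, so each monomial $x^i k(x)^j$, and hence each $P(x, k(x))$, lies in $AC[0,1]$. Granted this, for $\Psi(f) \in AC(c)$ one picks $P_n \in \calP_2$ with $P_n\vert_c \to \Psi(f)$ in $BV(c)$, applies continuity of $\Psi^{-1}$ to obtain $\Psi^{-1}(P_n\vert_c) \to f$ in $BV[0,1]$, and concludes from the closedness of $AC[0,1]$ in $BV[0,1]$ that $f \in AC[0,1]$.

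The only real obstacle is step (i)---that the substitution $P(x, k(x))$ is absolutely continuous---which is handled by the classical fact that a continuous convex function on a compact interval is absolutely continuous, after which the algebra structure of $AC[0,1]$ immediately gives step (ii). Everything else reduces to the already-established continuity of $\Psi$ and $\Psi^{-1}$ together with the closedness of $AC[0,1]$ in $BV[0,1]$.
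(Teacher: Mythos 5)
Your proof is correct and follows the same overall architecture as the paper's: both directions reduce to tracking the polynomial generators through $\Psi$ and $\Psi^{-1}$ and then invoking the bi-continuity of $\Psi$ together with the closedness of the $AC$ subalgebras. The forward direction (one-variable polynomials $q(x)$ are restrictions of two-variable polynomials, hence $\Psi$ maps $\calP_2([0,1])$ into $\calP_2(c)$) is exactly the content of the result the paper cites from \cite{AD}. The one place where you genuinely diverge is the key step of the reverse direction, namely showing that $p_r(x) = p(x,k(x))$ lies in $AC[0,1]$ for $p \in \calP_2$. The paper does this by differentiating along the curve and applying the Fundamental Theorem for Line Integrals, writing $p_r(x) = p_r(0) + \int_0^x \nabla p(s,k(s))\cdot(1,k'(s))\,ds$, which leans on the standing assumption that the curve is differentiable. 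You instead observe that a continuous convex function on a compact interval is classically absolutely continuous, and then use that $AC[0,1]$ is an algebra to conclude that each monomial $x^i k(x)^j$, and hence $p_r$, is absolutely continuous. Your route avoids differentiating $p_r$ altogether and would survive without the differentiability hypothesis on $k$; the paper's route is more direct given that hypothesis. One small caveat: your parenthetical ``its derivative is monotone, hence integrable'' is a slight shortcut --- a monotone derivative can be unbounded near the endpoints --- but the conclusion is saved by the continuity of $k$ at $0$ and $1$, which forces $\int_0^1 |k'| = $ a finite quantity controlled by the boundary values, so $k$ is indeed the indefinite integral of $k'$. With that noted, the argument is sound.
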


\begin{proof}
It is known (see \cite[Proposition~4.4]{AD}) that $\Psi$ is a norm-decreasing algebra homomorphism from $AC[0,1]$ to $AC(c)$.

Suppose first that $p \in AC(c)$ is a polynomial in two variables. Then $p_r(x) = p(x,k(x))$, is differentiable on $(0,1)$ with
  \[ p_r'(x) = \nabla p(x,k(x)) \cdot (1,k'(x)). \]
By the Fundamental Theorem for Line Integrals
  \[ p(x,k(x)) - p(0,k(0)) = \int_0^x \nabla p(s,k(s)) \cdot (1,k'(s)) \, ds \]
or
  \[ p_r(x) = p_r(0) + \int_0^x p_r'(s) \, ds \]
and hence $p_r$ is absolutely continuous. Of course $p_r = \Psi^{-1}(p)$.

Suppose now that $g \in AC(c)$ and that $\epsilon > 0$. Then there exists a polynomial $p \in \calP_2$ such that $\norm{g - p}_{BV(c)} < \frac{\epsilon}{2}$. Then
  \begin{align*}
  \snorm{\Psi^{-1}(g) - \Psi^{-1}(p)}_{BV[0,1]}
    & = \snorm{\Psi^{-1}(g) - \Psi^{-1}(p)}_\infty + \var(\Psi^{-1}(g) - \Psi^{-1}(p),[0,1]) \\
    &= \norm{g-p}_\infty + \pvar(g - p,c) \\
    & \le 2(\norm{g-p}_\infty + \var(g-p,c) \\
    & = 2 \norm{g-p}_{BV(c)} < \epsilon.
  \end{align*}
Since $AC[0,1]$ is a closed subalgebra of $BV[0,1]$ it follows that $\Psi^{-1}(g) \in AC[0,1]$. Thus $\Psi$ is a continuous Banach algebra isomorphism.
\end{proof}

Of course the map $h:[0,1] \to c$, $h(x) = (x,k(x))$ is a homeomorphism, and it is an easy consequence of the proposition that $\Phi(f) = f \circ h^{-1}$ is a Banach algebra isomorphism from $AC[0,1]$ to $AC(c)$. By a suitable rotation and rescaling we can conclude the following.

\begin{theorem}\label{AC-project}
Suppose that $c$ is a projectable convex curve in $\mR^2$. Then $AC(c) \simeq AC[0,1]$.
\end{theorem}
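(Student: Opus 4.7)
The plan is to reduce Theorem~\ref{AC-project} directly to Proposition~\ref{real-proj} via an affine change of coordinates. As was observed after Definition~\ref{2d-var}, any nontrivial affine transformation of the plane induces an isometric isomorphism of the corresponding $BV$ algebras that preserves the subalgebra $\calP_2(\cdot)$, and hence its closure $AC(\cdot)$. So if $\vecx$ and $\vecy$ denote the endpoints of $c$, I would first choose an affine bijection $T:\mR^2 \to \mR^2$, given as a composition of translation, rotation, and a single uniform scaling, with $T(\vecx) = (0,0)$ and $T(\vecy) = (1,0)$, and replace $c$ by $\tilde c = T(c)$. Such a $T$ preserves convexity, differentiability away from the endpoints, and the projectable property, and supplies an isometric Banach algebra isomorphism $AC(c) \simeq AC(\tilde c)$. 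It therefore suffices to prove $AC(\tilde c) \simeq AC[0,1]$.

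The second step is to verify that $\tilde c$ is the graph of a differentiable convex function $k: [0,1] \to \mR$, so that Proposition~\ref{real-proj} applies. By projectability, the orthogonal projection of $\tilde c$ onto the $x$-axis is exactly $[0,1]$. I would then argue that this projection is a bijection using convexity: the convex hull of $\tilde c$ projects to the same interval $[0,1]$, and for each $x \in (0,1)$ its vertical slice is a line segment whose top (or bottom, depending on orientation) is the unique point of $\tilde c$ above $x$, while the opposite endpoint of the slice lies on the chord joining $(0,0)$ and $(1,0)$. If instead two points of $\tilde c$ projected to the same $x_0$, the vertical chord between them, together with the supporting line of $\tilde c$ at whichever of the two points is not an endpoint of that vertical chord from within the hull, would force a portion of $\tilde c$ to project outside $[0,1]$, contradicting projectability. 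Convexity and differentiability of $k$ on $(0,1)$ are then inherited from the corresponding properties of $\tilde c$.

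Once $\tilde c$ is realised as the graph of such a $k$, Proposition~\ref{real-proj} provides immediately that $\Psi(g)(x,k(x)) = g(x)$ is a Banach algebra isomorphism $AC[0,1] \to AC(\tilde c)$. Composing with the affine identification of $AC(c)$ and $AC(\tilde c)$ yields the desired $AC(c) \simeq AC[0,1]$. The only nontrivial step is the bijectivity-of-projection claim in the middle paragraph; once that is settled, the rest of the argument is formal and uses only results already in the excerpt.
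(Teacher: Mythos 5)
Your proposal is correct and is essentially the paper's own argument: the paper deduces Theorem~\ref{AC-project} from Proposition~\ref{real-proj} in a single line (``by a suitable rotation and rescaling''), which is exactly the affine reduction you describe, using the affine invariance of $BV$ and of $\calP_2$. The only difference is that you also try to justify that the normalised projectable curve really is the graph of a function $k:[0,1]\to\mR$ --- a point the paper simply takes for granted --- and your supporting-line argument there is slightly informal (it does not quite exclude a vertical segment of the curve sitting over an endpoint of the chord, where the supporting line is itself perpendicular to the chord and so forces nothing outside $[0,1]$), but this edge case is equally unaddressed in the paper.
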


\section{A cut-off function lemma}\label{S:COL}

Every closed half-plane in $\mR^2$ can be written as $H = \{ \vecx \st (\vecx - \vecu) \cdot \vecv \ge 0\}$ for some $\vecu, \vecv \in \mR^2$, with $\norm{\vecv} = 1$. For $\epsilon > 0$ let
  \[ g_\epsilon(t) = \begin{cases}
                      0,   & \text{if $t \le \frac{\epsilon}{2}$} , \\
                      \frac{2t - \epsilon}{\epsilon}, & \text{if $\frac{\epsilon}{2} < t < \epsilon$}, \\
                      1,   & \text{if $t \ge \epsilon$. }
                     \end{cases}\]
If, as usual, $\sigma$ is a nonempty compact subset of the plane and we define $h_{H,\epsilon}: \sigma \to \mC$ by
  \[ h_{H,\epsilon}(\vecx) = g_\epsilon((\vecx - \vecu) \cdot \vecv) \]
then $h_\epsilon \in AC(\sigma)$.

Suppose that $P$ is a closed convex polygon. Then $P$ can be written as the intersection of closed half-planes, $P = \bigcap_{i=1}^n H_i$. Given  $\epsilon > 0$ we can define corresponding `cut-off' functions $h_{i,\epsilon}$ as above corresponding to these half planes. For $\epsilon$ sufficiently small the function $h_\epsilon = \prod_{i=1}^n h_{i,\epsilon}$ is then an $AC(\sigma)$ function which is zero on an open neighbourhood of the complement of $P$ and which is 1 on a smaller convex polygon in the interior of $P$. 

Recall that $\var(fg,P) \le \norm{f}_{\infty} \var(g,P) + \norm{g}_\infty \var(f,P)$. Since for each $i$, $\norm{h_i}_\infty = 1$ and $\var(h_i,P) = 1$, a simple induction proof shows that $\var(h_\epsilon,P) \le n$.
Consequently, if $c$ is any convex curve in $P$, then
  \begin{equation}\label{pvar-h}
    \pvar(h_\epsilon,c) \le 2 \var(h_\epsilon,c) \le 2 \var(h_\epsilon,P) \le 2n.
  \end{equation}

\section{The $AC$ joining lemma}\label{S:AC-join}

\begin{theorem} Suppose that $\sigma = \cup_{k=1}^n c_k \in \PIC$ is represented as a union of projectable convex curves. Let $\sigma_0 = \cup_{k=1}^{n-1} c_k$ be connected and suppose that $f \in BV(\sigma)$. Then $f \in AC(\sigma)$ if and only if
$f|\sigma_0 \in AC(\sigma_0)$ and $f|c_n \in AC(c_n)$.
\end{theorem}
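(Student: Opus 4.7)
The forward direction will follow almost immediately from the fact that restriction from $BV(\sigma)$ to $BV(\sigma_0)$ (and to $BV(c_n)$) is norm-decreasing: any list of points in $\sigma_0$ is also a list in $\sigma$, so $\var(g|\sigma_0,\sigma_0) \le \var(g,\sigma)$, and of course the sup norm only decreases under restriction. Since a restriction of a polynomial in $\calP_2$ is a polynomial in $\calP_2$ of the restricted set, the restriction maps carry $\calP_2(\sigma)$ into $\calP_2(\sigma_0)$ and $\calP_2(c_n)$, and by continuity they carry $AC(\sigma)$ into $AC(\sigma_0)$ and $AC(c_n)$ respectively.

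For the converse, given $\epsilon > 0$ my plan is to pick polynomials $p_1, p_2 \in \calP_2$ with $\norm{f|\sigma_0 - p_1}_{BV(\sigma_0)} < \epsilon$ and $\norm{f|c_n - p_2}_{BV(c_n)} < \epsilon$, and then to glue them with a cut-off function attached to the polygon $P_n$ of the mosaic that contains $c_n$. The mosaic property forces $\sigma_0 \cap P_n$ to be contained in the (one or two) shared endpoints $E = c_n \cap \sigma_0$, all of which lie on $\partial P_n$; in particular $\sigma_0 \cap \intr(P_n) = \emptyset$. Using the construction of Section~\ref{S:COL}, I will choose a cut-off $h_\delta \in AC(\sigma)$ that equals $1$ on a smaller convex polygon $Q_\delta \subset \intr(P_n)$ containing all of $c_n$ except for short arcs near $E$, equals $0$ outside $P_n$, and whose transition strip around each $v \in E$ has diameter tending to $0$ as $\delta \to 0$. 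Because the points of $E$ lie on $\partial P_n$ we have $h_\delta(v) = 0$; consequently $h_\delta \equiv 0$ on all of $\sigma_0$.

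Next I form $r = (1 - h_\delta)\, p_1 + h_\delta\, p_2 \in AC(\sigma)$. On each $c_k \subseteq \sigma_0$ we have $r = p_1$, so the contribution to $\norm{f - r}_{\PIC(\sigma)}$ from $\sigma_0$ is just $\norm{f|\sigma_0 - p_1}_{\PIC(\sigma_0)}$, which by Corollary~\ref{BV-PIC-equiv} is bounded by a constant multiple of $\epsilon$. The crucial estimate is on $c_n$, where I will write
\[ f - r = (f - p_2) + (1 - h_\delta)(p_2 - p_1). \]
The first term contributes $O(\epsilon)$ in $BV(c_n)$. The second term vanishes on $c_n \cap Q_\delta$, so it is supported on the short transition arcs near the endpoints in $E$. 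Using $|p_i(v) - f(v)| < \epsilon$ for $i = 1, 2$ together with continuity of the now-fixed polynomials $p_1, p_2$, the sup norm of $p_2 - p_1$ on the support of $1 - h_\delta$ is bounded by $2\epsilon + o(1)$ as $\delta \to 0$. Equation~(\ref{pvar-h}) bounds $\pvar(1 - h_\delta, c_n)$ by a constant independent of $\delta$, and since the transition arcs shrink to points, $\pvar(p_2 - p_1, \text{transition})$ is $o(1)$ as well. The product rule $\var(fg) \le \norm{f}_\infty \var(g) + \norm{g}_\infty \var(f)$ then yields $\pvar((1 - h_\delta)(p_2 - p_1), c_n) = O(\epsilon)$.

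Summing these contributions produces $\norm{f - r}_{BV(\sigma)} \le C \epsilon$ for a constant $C$ independent of $\epsilon$, and since $r \in AC(\sigma)$ and $AC(\sigma)$ is closed in $BV(\sigma)$, we conclude $f \in AC(\sigma)$. I expect the main obstacle to be the second paragraph's bookkeeping, namely verifying that the product of $\pvar(1 - h_\delta,\cdot)$ (which is only bounded, not small) with $\norm{p_2 - p_1}_{\infty,\text{transition}}$ is genuinely controlled by $\epsilon$: this is the point at which the freedom to shrink $\delta$ must be combined with the uniform estimate $|p_2(v) - p_1(v)| < 2\epsilon$ at the shared endpoints.
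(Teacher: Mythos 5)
Your proof is correct, but it organizes the reverse implication differently from the paper. The paper first manufactures a global absolutely continuous surrogate $f_c$ for $f|c_n$ by projecting $c_n$ onto the line through its endpoints and invoking Proposition~\ref{real-proj} together with \cite[Proposition~4.4]{AD}; it then works with $g = f - f_c$, which vanishes identically on $c_n$, approximates $g|\sigma_0$ by a single polynomial $p$, and takes $q = p(1-h)$. You instead approximate $f|\sigma_0$ and $f|c_n$ by two polynomials $p_1, p_2$ and glue them with the partition of unity $(1-h_\delta,\, h_\delta)$; since $p_2$ is a globally defined polynomial you never need the auxiliary extension, so your argument bypasses Proposition~\ref{real-proj} and the projection step entirely, which makes it more symmetric and arguably more elementary. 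Both proofs share the essential ingredients — the cut-off function of Section~\ref{S:COL}, the norm equivalence of Corollary~\ref{BV-PIC-equiv}, and crucially the same order of quantifiers (fix the polynomials first, then shrink $\delta$) — and your bookkeeping on the transition arcs closes correctly: the factor $\pvar(1-h_\delta,\cdot)\le 2m$ from (\ref{pvar-h}) is only bounded, but it multiplies $\norm{p_2-p_1}_{\infty}$ on the transition arcs, which is at most $2\epsilon + o(1)$ by the endpoint estimate $|p_i(v)-f(v)|<\epsilon$, while the factor $\norm{1-h_\delta}_\infty \le 1$ multiplies $\pvar(p_2-p_1,\cdot)$ on those arcs. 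The one step you should make explicit is why that last quantity is $o(1)$ as the arcs shrink: this is not automatic for a general $BV$ function, but holds here because $(p_2-p_1)\circ\gamma$ is Lipschitz in the arc-length parameter, so its variation over an arc is controlled by the arc's length. With that observation added, the argument is complete.
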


\begin{proof}
The forward implication follows from the general results about restricting $AC$ functions (see \cite[Lemma 4.5]{AD}).

For the reverse direction, by scaling, rotating and reflecting as necessary we can assume that the endpoints of $c = c_n$ are $0$ and $1$, and that $c$ lies in the closed upper half-plane. We shall assume first that both $0$ and $1$ lie in $\sigma_0$; the case where $c$ joins to $\sigma_0$ at just one endpoint is similar. Let $P$ denote the polygon containing $c$ from a suitable polygonal mosaic for $\sigma$, and let $m$ denote the number of sides of $P$.

By Proposition~\ref{real-proj} the function $f_r(\Re \vecz) = f(\vecz)$, $\vecz \in c$ lies in $AC[0,1]$. We can therefore define $f_c: \sigma \to \mC$ by
  \[ f_c(\vecz) = \begin{cases}
                   f(0),  & \text{if $\Re \vecz < 0$,}\\
                   f_r(\Re \vecz),  & \text{if $0 \le \Re \vecz \le 1$,} \\
                   f(1),  & \text{if $\Re \vecz > 1$.}
                  \end{cases} \]
By \cite[Proposition~4.4]{AD}, $f_c \in AC(\sigma)$. It follows then that $g = f-f_c$ is in $BV(\sigma)$. Our aim is to show that $g \in AC(\sigma)$ and hence that $f = g+f_c$ is in $AC(\sigma)$.

Since by hypothesis $f|\sigma_0 \in AC(\sigma_0)$ and, by restriction, $f_c|\sigma_0 \in AC(\sigma_0)$ we have that $g|\sigma_0 \in AC(\sigma_0)$. Clearly $g|c$ is identically zero.

It will suffice now to show that there exists $q \in AC(\sigma)$ arbitrarily close to $g$, since this will imply that $g \in AC(\sigma)$. Fix $\epsilon > 0$.
Using the equivalence of the norms, there exists $p \in \mathcal{P}_2$ such that $\norm{g-p}_{\PIC(\sigma_0)} < \epsilon$. Then
  \[ |p(0)| = |p(0) - g(0)| \le \norm{p-g}_\infty \le \norm{p-g}_{\PIC(\sigma_0)} < \epsilon. \]
Similarly $|p(1)| < \epsilon$. Let $p_r: [0,1] \to \mC$ be defined by $p_r(\Re \vecz) = f(\vecz)$, for $\vecz \in c$. Then $p_r \in AC[0,1]$ so there exists $\delta > 0$ such that $\var(p_r,[0,\delta]) < \epsilon$ and $\var(p_r,[1-\delta,1]) < \epsilon$. It follows that $|p_r(t)| < 2\epsilon$ for $t \in [0,\delta\ \cup [1-\delta,1]$.

Consider the curves
 \begin{align*}
  c_L        &= \{\vecz \in c \st 0 \le \Re \vecz \le \delta\}, \\
  c_\delta   &= \{\vecz \in c \st \delta \le \Re \vecz \le 1-\delta\}, \\
  c_R        &= \{\vecz \in c \st 1-\delta \le \Re \vecz \le 1\}.
 \end{align*}
Since $c_\delta$ is a compact set, there is a positive minimum  distance $\eta$ from this set to the boundary of $P$. By the results of the previous section there exists a cut-off function $h \in AC(\sigma)$ such that $h(\vecz)= 1$ for $\vecz \in c_\delta$, and $h(\vecz) =0$ for $\vecz \in \sigma_0$.
Let $q = p(1-h)$. Then certainly $q \in AC(\sigma)$. Then
\allowdisplaybreaks
  \begin{align*}
   \norm{g-q}_{BV(\sigma)} & \le K_\mathcal{P} \norm{g-q}_{\PIC(\sigma)} \\
      & \le K_\mathcal{P} \Bigl( \norm{g-q}_{\infty} + \sum_{k=1}^n \pvar(g-q, c_k) \Bigr) \\
      &= K_\mathcal{P} \left( \norm{g-q}_{c,\infty} + \pvar(g-q, c) \right) \\
      &= K_\mathcal{P} \left( \norm{q}_{c,\infty} + \pvar(p(1-h), c) \right) \\
      &\le K_\mathcal{P} \bigl( 2 \epsilon + \pvar(p(1-h),c_L)  \\
      & \qquad\qquad  + \pvar(p(1-h),c_\delta) + \pvar(p(1-h),c_R) \bigr)  \\
      &\le K_\mathcal{P} \bigl( 2 \epsilon + \norm{p}_{c_L,\infty} \pvar(1-h,c_L)
                              + \pvar(p,c_L) \norm{1-h}_{c_L,\infty} \\
     & \qquad\qquad  + \norm{p}_{c_R,\infty} \pvar(1-h,c_R)
                              + \pvar(p,c_R) \norm{1-h}_{c_R,\infty} \bigr)\\
     & \le K_\mathcal{P} \bigl( 2 \epsilon + \epsilon  \pvar(1-h,c_L) + \epsilon  + \epsilon  \pvar(1-h,c_R) + \epsilon \bigr)\\
     & \le K_\mathcal{P} \bigl( 4 + 4m \bigr) \epsilon
  \end{align*}
using (\ref{pvar-h}). Since this can be made arbitrarily small by a suitable choice of $\epsilon$, we are done.

The case where $c$ joins $\sigma_0$ at just a single point is similar.
\end{proof}

An easy induction proof then shows the following.

\begin{corollary}\label{AC-patch}
 Suppose that $\sigma = \cup_{k=1}^n c_k \in \PIC$ is represented as a union of projectable convex curves. Suppose that $f \in BV(\sigma)$. Then $f \in AC(\sigma)$ if and only if
 $f|c_i \in AC(c_i)$ for all $i$.
\end{corollary}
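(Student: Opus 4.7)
The plan is a straightforward induction on the number $n$ of component curves, using the preceding theorem (the ``AC joining'' result that allows us to peel off one projectable convex curve at a time).

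For the forward direction, if $f \in AC(\sigma)$ then for each $i$ the restriction $f|c_i$ lies in $AC(c_i)$ by the general restriction principle for absolutely continuous functions on compact subsets of the plane (cited via \cite[Lemma 4.5]{AD} in the proof of the preceding theorem). This is the easy direction and requires no induction.

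For the reverse direction, I would argue by induction on $n$. The base case $n=1$ is trivial since then $\sigma = c_1$. For the inductive step, the key observation is that the curves $c_1,\dots,c_n$ can be reordered so that each partial union $\sigma_k = \bigcup_{j=1}^{k} c_j$ is connected; this is possible because $\sigma \in \PIC$ is connected and the associated graph $G_{\sigma,\calM}$ is connected, so one may build up $\sigma$ one edge at a time while keeping the intermediate subgraph connected. With such an ordering fixed, assume $f|c_i \in AC(c_i)$ for every $i$. By the inductive hypothesis applied to the connected $\PIC$-set $\sigma_{n-1} = \bigcup_{j=1}^{n-1} c_j$ (whose components are the projectable convex curves $c_1,\dots,c_{n-1}$, and for which $(f|\sigma_{n-1})|c_i = f|c_i \in AC(c_i)$), we conclude that $f|\sigma_{n-1} \in AC(\sigma_{n-1})$. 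Combined with the assumption $f|c_n \in AC(c_n)$, the preceding theorem (applied with $\sigma_0 = \sigma_{n-1}$ and $c = c_n$) yields $f \in AC(\sigma)$.

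The only genuinely substantive step is verifying that an ordering of the curves producing connected partial unions exists, and this is a simple graph-theoretic fact about connected graphs that admits a spanning tree: start with any edge and at each stage adjoin an edge which shares an endpoint with the current subgraph. Everything else is bookkeeping: the hypothesis $f \in BV(\sigma)$ together with $f|c_i \in AC(c_i)$ implies $f|\sigma_{n-1} \in BV(\sigma_{n-1})$ (again by the restriction principle), so the inductive hypothesis genuinely applies. I expect no serious obstacle beyond this ordering remark.
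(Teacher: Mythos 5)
Your proposal is correct and is essentially the argument the paper intends: the paper dismisses this corollary with ``An easy induction proof then shows the following,'' and the induction it has in mind is exactly yours --- order the curves so that each partial union $\sigma_k$ is connected (as the paper already does in the proof of the $BV$--$\PIC$ norm equivalence lemma) and peel off one curve at a time using the preceding joining theorem. Your explicit remark about the existence of such an ordering is a worthwhile addition that the paper leaves implicit.
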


We can now give our Gelfand-Kolmogorov type theorem for $\PIC$ sets.

\begin{theorem}
Suppose that $\sigma, \tau \in \PIC$. Then $AC(\sigma)$ is isomorphic to $AC(\tau)$ if and only if $\sigma$ is homeomorphic to $\tau$.
\end{theorem}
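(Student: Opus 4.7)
The forward implication, that $AC(\sigma) \simeq AC(\tau)$ forces $\sigma$ and $\tau$ to be homeomorphic, is noted in Section~\ref{intro}; it follows from the identification of the maximal ideal space of $AC(\sigma)$ with $\sigma$, so that any Banach algebra isomorphism induces a homeomorphism of the underlying compact sets. All the real work lies in the converse.

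For the converse, I would proceed constructively. Starting from a topological homeomorphism between $\sigma$ and $\tau$, the plan is to invoke Theorem~\ref{good-mosaic} and the graph-theoretic construction at the end of Section~\ref{graph-isom} to obtain compatible decompositions $\sigma = \bigcup_{i=1}^M c_i$ and $\tau = \bigcup_{i=1}^M c_i'$ into projectable convex curves, indexed so that the rescaled arc-length parametrizations paste together into a homeomorphism $h: \sigma \to \tau$ satisfying $h(c_i) = c_i'$. Setting $\Phi(f) = f \circ h^{-1}$, Corollary~\ref{BV-isom-PIC} already guarantees that $\Phi$ is a Banach algebra isomorphism $BV(\sigma) \to BV(\tau)$, so it remains only to verify that $\Phi$ restricts to an isomorphism $AC(\sigma) \to AC(\tau)$.

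For the restriction property, I would invoke Corollary~\ref{AC-patch}: membership in $AC(\sigma)$ is detected componentwise on the $c_i$. It therefore suffices to check that for each $i$ the pullback $g \mapsto g \circ (h|c_i)^{-1}$ sends $AC(c_i)$ bijectively onto $AC(c_i')$. Via Theorem~\ref{AC-project}, together with the argument of Proposition~\ref{real-proj}, both $AC(c_i)$ and $AC(c_i')$ can be identified with $AC[0,1]$ through projection coordinates; under this identification the transferred map is a composition of three changes of variable on $[0,1]$: projection to arc-length on $c_i$, a linear rescaling between arc-length intervals, and arc-length to projection on $c_i'$.

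The main obstacle I expect is justifying the passage between projection and arc-length coordinates on a single projectable convex curve. If $c$ is the graph of a convex $k: [0,1] \to \mR$, then the arc-length function $s(x) = \int_0^x \sqrt{1 + k'(t)^2}\,dt$ is monotone and absolutely continuous but not generally Lipschitz, since $k'$ may be unbounded near the endpoints. Its inverse, however, satisfies $0 < x'(s) \le 1$ and is therefore Lipschitz. Composition with the Lipschitz inverse preserves $AC$ at once, while composition with $s$ itself will need the classical observation that an $AC$ function and a monotone $AC$ function both enjoy Luzin's N-property, so their composition is still $AC$. With this in place, the chain of identifications works in both directions, and applying the same reasoning to $\Phi^{-1}$ yields $\Phi(AC(\sigma)) = AC(\tau)$, completing the argument.
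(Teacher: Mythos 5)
Your proposal is correct and follows the same skeleton as the paper: decompose $\sigma$ and $\tau$ into matching projectable convex curves via Theorem~\ref{good-mosaic} and the graph construction of Section~\ref{graph-isom}, build a homeomorphism $h$ componentwise, and reduce the $AC$ question to the individual components via Corollary~\ref{AC-patch} and Theorem~\ref{AC-project}. The one place you genuinely diverge is the choice of $h_i$, and it costs you some real analysis that the paper deliberately avoids. You take $h_i$ to be the rescaled arc-length identification of $c_i$ with $c_i'$ (as used for the $BV$ statement), and are then forced to show that the change of variables between arc-length and projection coordinates on a convex curve preserves $AC[0,1]$. Your argument for this is essentially sound --- the inverse $x(s)$ is $1$-Lipschitz, and precomposition with the monotone $AC$ map $s(x)$ preserves $AC$ --- but the justification via Luzin's N-property alone is incomplete as stated: property N by itself does not give absolute continuity of the composition, so you either need the full Banach--Zarecki package (continuity, bounded variation, and property N of $f\circ s$, where $BV$-ness follows from monotonicity of $s$), or the more elementary $\epsilon$--$\delta$ argument that precomposing an $AC$ function with a monotone $AC$ map is $AC$ because monotonicity keeps the image intervals non-overlapping. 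The paper sidesteps all of this by defining $h_i$ as the composition of the projection of $c_i$ onto the segment $\ls[\vecx,\vecy]$, an affine rescaling, and the inverse projection onto $c_i'$; Theorem~\ref{AC-project} then applies verbatim to each factor and the transferred map on $[0,1]$ is affine, so no arc-length analysis is needed. (The proof of Corollary~\ref{BV-isom-PIC} is indifferent to which homeomorphism is used, since $\pvar$ does not depend on the parametrization, so the $BV$ isomorphism survives this change of $h$.) Both routes work; the paper's choice of $h_i$ is the path of least resistance, while yours requires the extra reparametrization lemma, correctly identified as the main obstacle.
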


\begin{proof}
As noted in the introduction, we only need to show the reverse implication. Suppose then that $\sigma, \tau \in \PIC$ and that
$\sigma$ is homeomorphic to $\tau$.

We saw in Section~\ref{graph-isom} that we can find simple polygonal mosaics
for $\sigma$ and $\tau$ which split these sets up as drawings of isomorphic graphs. In particular we can write
  \[ \sigma = \bigcup_{i=1}^n c_i, \qquad \tau = \bigcup_{i=1}^n c_i'\]
where $c_i$ and $c_i'$ are matching edges of the associated graphs. If necessary one could use Proposition~\ref{projectable} and the Partition Lemma to further refine this decomposition so that each curve is projectable, so we will assume that all the curves have this property.

By Theorem~\ref{AC-project}, for each $i$, $AC(c_i)$ and $AC(c_i')$ are both isomorphic to $AC[0,1]$ via the homeomorphisms which projects these curves onto the line segments joining their endpoints and then rescales the interval. This in turn generates a homeomorphism $h_i: c_i \to c_i'$ whose orientation can be chosen to be consistent with the graph isomorphism in the way it maps endpoints (that is, graph vertices) from one set to another (see Figure~\ref{h_i-pic}).

 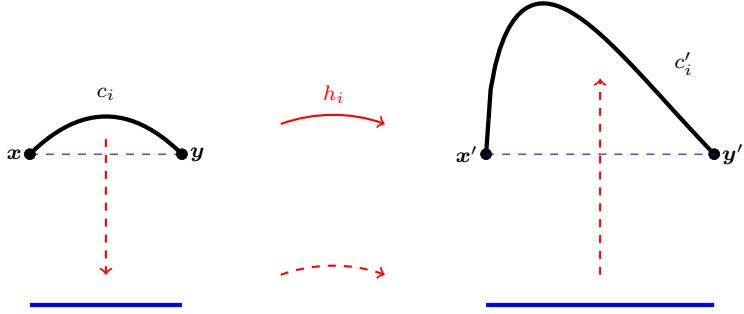
\begin{figure}[ht!]
 \begin{center}
  \begin{tikzpicture}[scale=2]
   \draw[ultra thick, black] (-2.0, 0.) -- (-1.96, 0.384e-1) -- (-1.92, 0.736e-1) -- (-1.88, .1056) -- (-1.84, .1344) -- (-1.80, .1600) -- (-1.76, .1824) -- (-1.72, .2016) -- (-1.68, .2176) -- (-1.64, .2304) -- (-1.60, .2400) -- (-1.56, .2464) -- (-1.52, .2496) -- (-1.48, .2496) -- (-1.44, .2464) -- (-1.40, .2400) -- (-1.36, .2304) -- (-1.32, .2176) -- (-1.28, .2016) -- (-1.24, .1824) -- (-1.20, .1600) -- (-1.16, .1344) -- (-1.12, .1056) -- (-1.08, 0.736e-1) -- (-1.04, 0.384e-1) -- (-1.00, 0.);
   \draw[blue, dashed] (-2,0) -- (-1,0);

   \draw (-1.5,0.4) node {$c_i$};
   \filldraw[black] (-2,0) circle (1pt);
   \filldraw[black] (-1,0) circle (1pt);
   \draw (-2,0) node[left] {$\vecx$};
   \draw (-1,0) node[right] {$\vecy$};

   \draw[blue, ultra thick] (-2,-1) -- (-1,-1);

   \draw[ultra thick, black] (1., 0.) -- (1.03, .4298) -- (1.06, .5878) -- (1.09, .6958) -- (1.12, .7762) -- (1.15, .8379) -- (1.18, .8858) -- (1.21, .9229) -- (1.24, .9511) -- (1.27, .9718) -- (1.30, .9863) -- (1.33, .9953) -- (1.36, .9995) -- (1.39, .9995) -- (1.42, .9958) -- (1.45, .9888) -- (1.48, .9788) -- (1.51, .9661) -- (1.54, .9511) -- (1.57, .9338) -- (1.60, .9147) -- (1.63, .8937) -- (1.66, .8712) -- (1.69, .8473) -- (1.72, .8221) -- (1.75, .7957) -- (1.78, .7683) -- (1.81, .7400) -- (1.84, .7108) -- (1.87, .6809) -- (1.90, .6504) -- (1.93, .6193) -- (1.96, .5878) -- (1.99, .5558) -- (2.02, .5235) -- (2.05, .4909) -- (2.08, .4581) -- (2.11, .4251) -- (2.14, .3920) -- (2.17, .3588) -- (2.20, .3256) -- (2.23, .2924) -- (2.26, .2593) -- (2.29, .2262) -- (2.32, .1933) -- (2.35, .1605) -- (2.38, .1279) -- (2.41, 0.9556e-1) -- (2.44, 0.6343e-1) -- (2.47, 0.3157e-1) -- (2.50, 0);

   \draw (2.3,0.6) node {$c_i'$};
   \filldraw[black] (1,0) circle (1pt);
   \filldraw[black] (2.5,0) circle (1pt);
   \draw (1,0) node[left] {$\vecx'$};
   \draw (2.5,0) node[right] {$\vecy'$};

   \draw[blue, dashed] (1,0) -- (2.5,0);

   \draw[blue, ultra thick] (1,-1) -- (2.5,-1);

   \draw[red,thick,->] (-0.35,0.2) arc (110:70:1);
   \draw[red] (0,0.4) node {$h_i$};

   \draw[red,thick, dashed,->] (-1.5, 0.1) -- (-1.5, -0.8);
     \draw[red,thick, dashed,->] (-0.35,-0.8) arc (110:70:1);
   \draw[red,thick, dashed,->] (1.75, -0.8) -- (1.75, 0.5);

  \end{tikzpicture}
 \caption{Each homeomorphism $h_i$ is a composition of projections and rescaling. One can choose the orientation in the middle step to make sure that if $\vecx'$ is the point in $\tau$ which corresponds under the graph isomorphism to $\vecx \in \sigma$ then $h_i$ maps $\vecx$ to $\vecx'$.}\label{h_i-pic}
 \end{center}
 \end{figure}

It follows that the map $h: \sigma \to \tau$ determined by $h|c_i = h_i$ is well-defined, and is a homeomorphism from $\sigma$ to $\tau$.

For $f: \sigma \to \mC$, let $\Phi(f) = f \circ h^{-1}$ be the corresponding function defined on $\tau$. As we have seen, $\Phi$ is a Banach algebra isomorphism from $BV(\sigma)$ to $BV(\tau)$.
By Corollary~\ref{AC-patch},
  \[ f \in AC(\sigma) \iff \text{$f|c_i \in AC(c_i)$ for all $i$}
   \iff \text{$\Phi(f)|c_i' \in AC(c_i')$ for all $i$}
   \iff \Phi(f) \in AC(\tau).
  \]
Thus, $AC(\sigma)$ is isomorphic to $AC(\tau)$.
\end{proof}

\section{A final remark}\label{S:Final-rem}

Most of the results in this article concern families of connected compact subsets of the plane. In this section we discuss how to deal with finite disjoint unions of such sets. The main step is proving Proposition~\ref{disj-poly}. First we need an extension lemma which is essentially given in the unpublished note \cite{DL2} (see Lemma~5.2).

\begin{lemma}\label{AC-ext}
Suppose that $\sigma_1,\sigma_2 \subseteq \mC$ are nonempty compact sets with $\Re \vecx < 0$ for all $\vecx \in \sigma_1$ and $\Re \vecx > 0$ for all $\vecx \in \sigma_2$. Let $\sigma = \sigma_1 \cup \sigma_2$. Suppose that $g \in AC(\sigma_1)$ and that ${\hat g}: \sigma \to \mC$ is defined by
  \[ {\hat g}(\vecx) = \begin{cases}
                   g(\vecx), & \text{if $\vecx \in \sigma_1$}, \\
                   0,          & \text{if $\vecx \in \sigma_2$}.
                  \end{cases}. \]
Then ${\hat g} \in AC(\sigma)$.
\end{lemma}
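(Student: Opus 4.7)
The plan is to realize $\hat g$ as the $BV(\sigma)$-limit of functions of the form $p_n\phi$, where $p_n\in\calP_2$ approximate $g$ on $\sigma_1$ and $\phi\in AC(\sigma)$ is a fixed cutoff that equals $1$ on $\sigma_1$ and $0$ on $\sigma_2$. Since $AC(\sigma)$ is closed in $BV(\sigma)$, this will give $\hat g\in AC(\sigma)$.

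By compactness and strict separation, there is a $\delta>0$ with $\Re\vecx\le-\delta$ for $\vecx\in\sigma_1$ and $\Re\vecx\ge\delta$ for $\vecx\in\sigma_2$. Applying the $g_\epsilon$ construction of Section~\ref{S:COL} to the half-plane $H=\{\vecx\st \Re\vecx\le-\delta/2\}$ with $\vecv=(-1,0)$, $\vecu=(-\delta/2,0)$ and any $\epsilon<\delta/2$ produces a function $\phi=h_{H,\epsilon}\in AC(\sigma)$ satisfying $\phi\equiv1$ on $\sigma_1$ and $\phi\equiv0$ on $\sigma_2$. Since $g\in AC(\sigma_1)$, pick $p_n\in\calP_2$ with $\norm{p_n-g}_{BV(\sigma_1)}\to0$, and set $q_n=p_n\phi$. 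Because $AC(\sigma)$ is a Banach algebra and both factors lie in it, $q_n\in AC(\sigma)$; by construction $q_n|\sigma_1=p_n$ and $q_n|\sigma_2=0$, so $r_n:=q_n-\hat g$ equals $p_n-g$ on $\sigma_1$ and vanishes on $\sigma_2$.

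The core estimate I aim to establish is
\[
   \norm{r_n}_{BV(\sigma)}\le 2\norm{p_n-g}_{BV(\sigma_1)},
\]
which finishes the proof. The sup-norm part is immediate. For the two-dimensional variation, given any list $S=[\vecz_0,\dots,\vecz_N]$ in $\sigma$, classify each segment $\ls[\vecz_{i-1},\vecz_i]$ of $\gamma_S$ by whether its endpoints lie in $\sigma_1$ or $\sigma_2$. Segments with both endpoints in $\sigma_2$ contribute nothing to $\cvar(r_n,S)$. Let $S_1$ be the sublist of $S$ consisting of the points that lie in $\sigma_1$ (after discarding adjacent duplicates). The $\sigma_1$-to-$\sigma_1$ contributions are bounded by $\cvar(p_n-g,S_1)$, and a standard sublist argument (as used in Section~\ref{S:PIC-norm}) gives $\vf(S_1)\le\vf(S)$. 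The remaining segments have endpoints on strictly opposite sides of the imaginary axis $\ell_0$, so each is a crossing segment of $S$ on $\ell_0$ in the sense of Definition~\ref{crossing-defn}; their number is therefore at most $\vf(S,\ell_0)\le\vf(S)$, and each contributes at most $\norm{p_n-g}_{\infty,\sigma_1}$. Dividing by $\vf(S)$ and combining gives
\[
   \frac{\cvar(r_n,S)}{\vf(S)}
     \le \frac{\cvar(p_n-g,S_1)}{\vf(S_1)}+\norm{p_n-g}_{\infty,\sigma_1}
     \le \norm{p_n-g}_{BV(\sigma_1)},
\]
so $\var(r_n,\sigma)\le\norm{p_n-g}_{BV(\sigma_1)}$ and the claim follows.

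The main obstacle is the bookkeeping on the mixed (``switching'') segments: one must verify that each really is a strict crossing of $\ell_0$ so that its count is dominated by $\vf(S,\ell_0)\le\vf(S)$, and that the sublist $S_1$ can be handled cleanly (including the trivial cases where $S_1$ is empty or has a single distinct point, where $\cvar$ vanishes). Once these routine points are dealt with, the estimate and the conclusion $\hat g\in AC(\sigma)$ drop out.
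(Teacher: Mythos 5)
Your proposal is correct and follows essentially the same route as the paper's proof: approximate $g$ by a polynomial on $\sigma_1$, multiply by a cut-off supported on the left half-plane (the paper uses the characteristic function $\chi$ of the half-plane, which agrees on $\sigma$ with your $h_{H,\epsilon}$), and then bound the variation of the difference by splitting each list into segments with both endpoints in $\sigma_1$, both in $\sigma_2$, or mixed, handling the first via a sublist with $\vf(S_1)\le\vf(S)$ and the mixed ones via crossings of the imaginary axis. The only difference is cosmetic bookkeeping in the choice of sublist, and your estimate $\norm{r_n}_{BV(\sigma)}\le 2\norm{p_n-g}_{BV(\sigma_1)}$ matches the paper's bound exactly.
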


It is worth remarking that the separation of the two components is vital here. On cannot in general extend an absolutely continuous function on a compact set $\sigma_1$ to an absolutely continuous function on a superset by setting it to be zero off $\sigma_1$.

\begin{proof}
Fix $\epsilon > 0$. Since $g \in AC(\sigma_1)$ there exists a polynomial $p \in \calP_2$ such that $\norm{g - p}_{BV(\sigma_1)} < \frac{\epsilon}{2}$. Let $\chi$ denote the characteristic function of the left half-plane, restricted to $\sigma$. Then $\chi \in AC(\sigma)$ and so ${\hat p} = \chi p \in AC(\sigma)$ too. Let $\delta = {\hat g} - {\hat p}$.

Let $S = [\vecx_0,\vecx_1,\dots,\vecx_n]$ be an ordered list of elements in $\sigma$. Partition the indices $1,\dots,n$ into disjoint sets
  \begin{align*}
   J_1 &= \{j \st \vecx_{j-1}, \vecx_{j} \in \sigma_1 \}, \\
   J_2 &= \{j \st \vecx_{j-1}, \vecx_{j} \in \sigma_2 \}, \\
   J_3 &= \{1,\dots,n\} \setminus (J_1 \cup J_2).
  \end{align*}
Note that $\vf(S)$ must be at least as large as the number of elements in $J_3$.
Then, noting that $\delta$ is identically zero on $\sigma_2$, and treating empty sums as zero,
  \begin{align*}
   \sum_{j=1}^n |\delta(\vecx_j) - \delta(\vecx_{j-1})|
   & = \sum_{i=1}^3 \sum_{j \in J_1} |\delta(\vecx_j) - \delta(\vecx_{j-1})| \\
   &\le     \sum_{j \in J_1} |\delta(\vecx_j) - \delta(\vecx_{j-1})| +  |J_3| \norm{\delta}_\infty.
  \end{align*}
Now
 \[ \frac{|J_3| \norm{\delta}_\infty}{\vf(S)} \le \norm{\delta}_\infty  = \norm{\delta}_{\sigma_1,\infty} . \] 
If $J_1 \ne \emptyset$,  let $S_1 = [\vecx_{j_0}, \dots, \vecx_{j_\ell}]$ be the sublist of $S$ containing all the $x_j$ such that $x_j \in \sigma_1$ and at least one of $x_{j-1}$ or $x_{j+1}$ also lie in $\sigma_1$.  Since omitting points from a list can only decrease the variation, $\vf(S_1) \le \vf(S)$. Thus
  \begin{align*}
     \frac{\sum_{j \in J_1} |\delta(\vecx_j)- \delta(\vecx_{j-1})| }{\vf(S)}
     & \le \frac{\sum_{i=1}^\ell |\delta(\vecx_{j_i}) -\delta(\vecx_{j_{i-1}})| }{\vf(S_1)} \\
     & \le \var(\delta,\sigma_1).
  \end{align*}
It follows that (whether $J_1 = \emptyset$ or not)
  \[ \frac{\cvar(\delta,S)}{\vf(S)} \le \var(\delta,\sigma_1) + \norm{\delta}_{\sigma_1,\infty} = \norm{\delta}_{BV(\sigma_1)} \]
and so $\norm{\delta}_{BV(\sigma)} \le 2 \norm{\delta}_{BV(\sigma_1)} < \epsilon$. Thus $g \in AC(\sigma)$.
\end{proof}

Recall that if $\calA$ and $\calB$ are Banach algebras then $\calA \oplus \calB$ is a Banach algebra under componentwise operations and the norm $\norm{(a,b)} = \max\{\norm{a}_\calA,\norm{b}_\calB\}$.

\begin{proposition}\label{disj-poly}
Suppose that $P$ and $Q$ are disjoint polygons and that $\sigma \subseteq P \cup Q \subseteq \mR^2$ is a compact set such that $\sigma_P = \sigma \cap P$ and $\sigma_Q = \sigma \cap Q$ are both nonempty (and necessarily compact). Then $AC(\sigma)$ is isomorphic to $AC(\sigma_P) \oplus AC(\sigma_Q)$.
\end{proposition}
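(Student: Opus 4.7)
The plan is to prove that the restriction map $\Psi: AC(\sigma) \to AC(\sigma_P) \oplus AC(\sigma_Q)$, $\Psi(f) = (f|_{\sigma_P}, f|_{\sigma_Q})$, is a Banach algebra isomorphism. By the standard restriction result (\cite[Lemma 4.5]{AD}), $\Psi$ is a well-defined bounded algebra homomorphism, and it is injective because $\sigma = \sigma_P \cup \sigma_Q$. Continuity of $\Psi^{-1}$ will follow from the open mapping theorem (or from the automatic continuity noted in the introduction), so the real task is surjectivity.

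By linearity and the symmetry between $\sigma_P$ and $\sigma_Q$, surjectivity reduces to showing: for every $g \in AC(\sigma_P)$, the extension $\hat g$ defined by $\hat g|_{\sigma_P} = g$ and $\hat g|_{\sigma_Q} = 0$ lies in $AC(\sigma)$. This is a generalization of Lemma~\ref{AC-ext} from half-plane separation to disjoint polygons, and I would mimic that proof with $\chi$ replaced by an $AC(\sigma)$ cut-off $\phi$ satisfying $\phi|_{\sigma_P} = 1$ and $\phi|_{\sigma_Q} = 0$. Since $P, Q$ are disjoint compact polygons, $d := \dist(\sigma_P, \sigma_Q) > 0$. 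First I would cover $\sigma_P$ by finitely many closed convex polygons $A_1, \dots, A_N$, each contained in a slight convex enlargement $A_i'$ still disjoint from $\sigma_Q$. The cut-off construction of Section~\ref{S:COL} applied to $A_i'$ produces $h_i \in AC(\sigma)$ with $h_i = 1$ on $A_i$ and $h_i = 0$ on $\sigma \setminus A_i'$; then $\phi = 1 - \prod_{i=1}^N (1 - h_i) \in AC(\sigma)$ is the desired separator. Approximating $g$ by polynomials $p_n$ with $\|p_n - g\|_{BV(\sigma_P)} \to 0$ and setting $q_n = \phi p_n \in AC(\sigma)$, the differences $\eta_n = q_n - \hat g$ vanish on $\sigma_Q$ and equal $p_n - g$ on $\sigma_P$.

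The main obstacle is showing $\|\eta_n\|_{BV(\sigma)} \to 0$, which amounts to the uniform bound $\|\eta\|_{BV(\sigma)} \le C_\sigma \|\eta|_{\sigma_P}\|_{BV(\sigma_P)}$ for every $\eta \in BV(\sigma)$ vanishing on $\sigma_Q$. For a list $S$ in $\sigma$ I would split the indices into $J_1, J_2, J_3$ as in the proof of Lemma~\ref{AC-ext}: the $J_2$ sum is zero, and the $J_1$ sum is controlled by $\var(\eta, \sigma_P) \cdot \vf(S)$ through the usual sublist argument. The essential new ingredient is a uniform estimate $|J_3| \le L \cdot \vf(S)$ for a constant $L$ depending only on $\sigma$. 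To achieve this I would also cover $\sigma_Q$ by finitely many closed convex compact sets $B_1, \dots, B_M$, arranged so that each $A_i$ is disjoint from each $B_j$ (easily done by making both covers fine enough relative to $d$). Each pair $(A_i, B_j)$ of disjoint compact convex sets admits a strictly separating line $\ell_{ij}$; every mixed segment in $J_3$ joins some $A_i$ to some $B_j$ and is therefore a Definition~\ref{crossing-defn}(1) crossing of $\ell_{ij}$. Pigeonholing over the $NM$ pairs yields $\vf(S) \ge |J_3|/(NM)$, completing the variation estimate and hence the proof that $\hat g \in AC(\sigma)$.
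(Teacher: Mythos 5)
Your proof is correct, but it takes a genuinely different route from the paper's. The paper reduces to the already-proved Lemma~\ref{AC-ext} by transporting the whole configuration: it invokes the algorithm of Section~7 of \cite{DL} to build a homeomorphism $h$ of the plane, composed of locally piecewise affine maps, sending $P$ and $Q$ to triangles with disjoint projections onto the real axis; the induced map $\Phi(f)=f\circ h^{-1}$ is an $AC$ isomorphism, the separated-by-a-line case of the extension lemma applies to $h(\sigma)$, and the result is pulled back. You instead work in place and in effect re-prove Lemma~\ref{AC-ext} in the non-separated setting, upgrading its two ingredients: the characteristic function $\chi$ of a half-plane is replaced by the product cut-off $\phi=1-\prod_i(1-h_i)$ built from the Section~\ref{S:COL} construction over a finite convex cover of $\sigma_P$, and the estimate $\vf(S)\ge |J_3|$ (one separating line) is replaced by $\vf(S)\ge |J_3|/(NM)$ via a pigeonhole over the $NM$ lines strictly separating the pairs $(A_i,B_j)$ of disjoint compact convex sets. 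Both steps are sound; the pigeonhole-over-separating-lines device is exactly the one the paper itself uses for the sets $I_3$ and the sides of $P_{k+1}$ in Section~\ref{S:PIC-norm}, so it is very much in the spirit of the paper. What each approach buys: the paper's proof is short because it leans on nontrivial external machinery (the piecewise affine deformation theory of \cite{DL}), whereas yours is self-contained modulo Sections~\ref{S:COL} and \ref{S:Final-rem} and is in fact slightly more general --- it never uses that $P$ and $Q$ are polygons, only that $\sigma_P$ and $\sigma_Q$ are disjoint compact sets, hence at positive distance. The only point worth spelling out in a final write-up is the choice of the enlargements $A_i'$: each $A_i$ must be compactly contained in the interior of $A_i'$ so that, for $\epsilon$ small, the Section~\ref{S:COL} cut-off equals $1$ on all of $A_i$ (that construction gives the value $1$ only on a smaller polygon inside $A_i'$), and all $A_i'$ must stay at positive distance from $\sigma_Q$; both are immediate if the cover is taken at scale small compared with $\dist(\sigma_P,\sigma_Q)$.
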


\begin{proof}
For $f: \sigma \to \mC$, let $J(f) = (f|\sigma_P,f|\sigma_Q)$.  By the general restriction theorems, if $f \in AC(\sigma)$ then $J(f) \in AC(\sigma_P) \oplus AC(\sigma_Q)$. Indeed $J$ is a norm 1 Banach algebra homomorphism. To complete the proof we need to show that $J$ is onto and that it has a continuous inverse.

Suppose then that $(f_P,f_Q) \in AC(\sigma_P) \oplus AC(\sigma_Q)$. Define ${\hat f}_P: \sigma \to \mC$ by
  \[ {\hat f}_P(\vecx) = \begin{cases}
                   f_P(\vecx), & \text{if $\vecx \in \sigma_P$}, \\
                   0,          & \text{if $\vecx \in \sigma_Q$}.
                  \end{cases} \]

Let $S$ be a large polygon which includes both $P$ and $Q$ in its interior. Following the algorithm in Section~7 of \cite{DL}, there exists a finite sequence of locally piecewise affine maps whose composition $h$ is a  homeomorphism of the plane which maps $S$ to a triangle and $P$ and $Q$ to triangles with disjoint projections on the real axis. Let $\tau = h(\sigma)$, $\tau_P = h(\sigma_P)$ and $\tau_Q = h(\sigma_Q)$.

\begin{figure}[ht!]
\begin{center}
 \begin{tikzpicture}[scale=2]

 \draw[fill,blue!10] (-1,-1) -- (1,-1) -- (1,1) -- (-1,1) -- (-1,-1);
 \draw[thick,black] (-1,-1) -- (1,-1) -- (1,1) -- (-1,1) -- (-1,-1);

 \draw[fill,white] (-0.7,-0.3) -- (-0.4,0.5) -- (0.8,0.5) -- (0.7,-0.5) -- (0.2,0) -- (0.6,0.3) -- (-0.4,0.2) -- (-0.7,-0.3);
 \draw[black] (-0.7,-0.3) -- (-0.4,0.5) -- (0.8,0.5) -- (0.7,-0.5) -- (0.2,0) -- (0.6,0.3) -- (-0.4,0.2) -- (-0.7,-0.3);

 \draw[fill,white] (0.1,0.1) -- (-0.3,-0.1) -- (-0.4,-0.8) -- (0,-0.3)  -- (0.2,-0.6) -- (0.1,0.1);
 \draw[black] (0.1,0.1) -- (-0.3,-0.1) -- (-0.4,-0.8) -- (0,-0.3)  -- (0.2,-0.6) -- (0.1,0.1);

 \draw[thick, blue] (-0.55,-0.0) -- (-0.34,0.4) -- (0.7,0.45) -- (0.65,-0.3) -- (0.35,-0.05);
 \draw[thick, blue] (-0.3,-0.6) -- (-0.1,-0.1) -- (0.13,-0.4);

  \draw (-0.9,0.87) node {$S$};
 \draw (0.7,0.6) node {$P$};
 \draw (0,-0.6) node {$Q$};

 \draw[red,thick,->] (1.7,0.2) arc (110:70:1);
 \draw[red] (2.1,0.4) node {$h$};

  \draw[fill,blue!10] (3,-1) -- (5,0) -- (3,1) -- (3,-1);
  \draw[thick,black] (3,-1) -- (5,0) -- (3,1) -- (3,-1);

   \draw[fill,white] (3.2,-0.2) -- (3.5,0.2) -- (3.8,-0.2) -- (3.2,-0.2);
  \draw[black] (3.2,-0.2) -- (3.5,0.2) -- (3.8,-0.2) -- (3.2,-0.2);

     \draw[fill,white] (3.9,-0.2) -- (4.2,0.2) -- (4.5,-0.2) -- (3.9,-0.2);
  \draw[black]  (3.9,-0.2) -- (4.2,0.2) -- (4.5,-0.2) -- (3.9,-0.2);

  \draw[thick, blue] (3.35, -0.15) -- (3.4,0.01) -- (3.45,.05) -- (3.55,-0.1) -- (3.5,0.1) -- (3.6,-0.1);
  \draw[thick, blue] (4.2,0.1) -- (4,-0.15 ) -- (4.2,0.02) -- (4.35,-0.1);
 \draw(3.3,0.3) node {$h(P)$};
 \draw(3.95,0.3) node {$h(Q)$};

 \end{tikzpicture}

\caption{Transforming polygons to triangles via a sequence of locally piecewise affine maps.}\label{poly-to-triangle}
\end{center}
\end{figure}
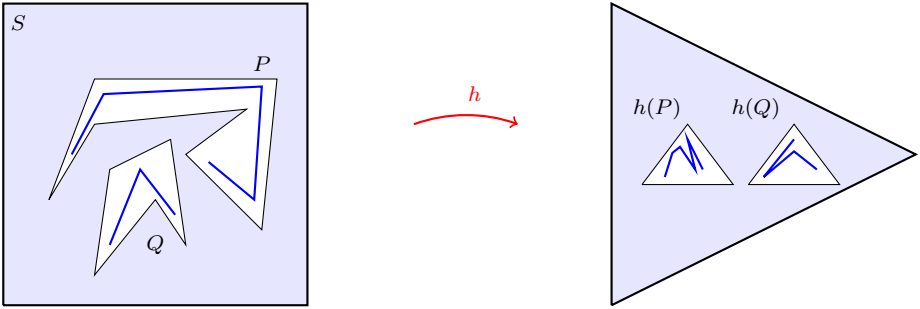

Since $h$ is a composition of locally piecewise affine maps, it generates an isomorphism $\Phi: AC(\sigma) \to AC(\tau)$ via $\Phi(f) = f \circ h^{-1}$.  This formula also determines an isomorphism $\Phi_P$ from $AC(\sigma_P)$ to $AC(\tau_P)$. In particular then $\Phi_P(f_p) \in AC(\tau_P)$.

By Lemma~\ref{AC-ext}, $\Phi_P(f_p)$ can be extended to a function $g \in AC(\tau)$ by setting $g|\tau_Q \equiv 0$. But ${\hat f}_P = \Phi^{-1}(g)$ and so ${\hat f}_P \in AC(\sigma)$.

Similarly, by setting it to be zero on $\sigma_P$, $f_Q$ can be extended to an absolutely continuous function ${\hat f}_Q$ on all of $\sigma$, and consequently $f ={\hat f}_P+{\hat f}_Q \in AC(\sigma)$. Clearly $J(f) = (f_P,f_Q)$ and hence $J$ is onto.

The continuity of $J^{-1}$ follows from the Banach Isomorphism Theorem.
\end{proof}

Let $\UPIC$ denote the family of compact subsets of the plane which are finite unions of pairwise disjoint sets $\sigma_m \in \PIC$, $m = 1,\dots,M$.

\begin{theorem}
Suppose that $\sigma, \tau \in \UPIC$. Then $AC(\sigma)$ is isomorphic to $AC(\tau)$ if and only if $\sigma$ is homeomorphic to $\tau$.
\end{theorem}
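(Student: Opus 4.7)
The plan is to reduce to the connected case (Theorem~\ref{MainThm}) and the two-piece splitting result (Proposition~\ref{disj-poly}), and then iterate. The forward direction is noted at the start of the introduction: since the maximal ideal space of $AC(\sigma)$ is canonically identified with $\sigma$, an isomorphism $AC(\sigma) \simeq AC(\tau)$ forces a homeomorphism $\sigma \cong \tau$.

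For the reverse direction, write $\sigma = \bigsqcup_{m=1}^{M} \sigma_m$ and $\tau = \bigsqcup_{n=1}^{N} \tau_n$ with each $\sigma_m, \tau_n \in \PIC$. Because $\PIC$ sets are connected, these are exactly the connected component decompositions of $\sigma$ and $\tau$. Any homeomorphism $\sigma \to \tau$ carries connected components bijectively to connected components, so $M = N$ and, after reindexing, $\sigma_m$ is homeomorphic to $\tau_m$ for each $m$. Theorem~\ref{MainThm} then supplies Banach algebra isomorphisms $\Psi_m : AC(\sigma_m) \to AC(\tau_m)$.

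Next I would prove by induction on $M$ that
\[
AC(\sigma) \;\simeq\; \bigoplus_{m=1}^{M} AC(\sigma_m),
\]
with the analogous statement for $\tau$. The base case $M=1$ is immediate. For the inductive step, the sets $\sigma_1$ and $\widetilde{\sigma} := \bigsqcup_{m \ge 2} \sigma_m$ are disjoint compact subsets of $\mR^2$, so they are separated by a positive distance. It is therefore elementary to enclose them in disjoint closed convex polygons $P \supseteq \sigma_1$ and $Q \supseteq \widetilde{\sigma}$ (for example, by covering each set with a finite union of sufficiently small closed squares from a common grid and taking polygonal hulls). Proposition~\ref{disj-poly} then gives $AC(\sigma) \simeq AC(\sigma_1) \oplus AC(\widetilde{\sigma})$, and the induction hypothesis applied to $\widetilde{\sigma} \in \UPIC$ closes the step.

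Combining everything, the componentwise map $(f_1, \dots, f_M) \mapsto (\Psi_1(f_1), \dots, \Psi_M(f_M))$ is a Banach algebra isomorphism from $\bigoplus_m AC(\sigma_m)$ onto $\bigoplus_m AC(\tau_m)$, so composing with the two decomposition isomorphisms yields $AC(\sigma) \simeq AC(\tau)$. I do not anticipate any serious obstacle in this argument: the only mildly geometric point is the polygonal separation of $\sigma_1$ from $\widetilde{\sigma}$ required to apply Proposition~\ref{disj-poly}, and this is routine from positivity of the distance between disjoint compact subsets of the plane.
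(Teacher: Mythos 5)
Your argument follows the paper's proof in all essentials: both reduce to the connected-component decomposition, match components under the homeomorphism, invoke the connected case (Theorem~\ref{MainThm}) componentwise, and then split $AC(\sigma)$ as a direct sum via Proposition~\ref{disj-poly}. Your one real refinement is making the $M$-fold splitting an explicit induction on the number of components; the paper simply encloses the $\sigma_m$ in ``disjoint polygons $P_1,\dots,P_M$'' and cites the proposition, which is stated only for two polygons, so your grouping of $\sigma_1$ against $\widetilde{\sigma}=\bigcup_{m\ge 2}\sigma_m$ is a welcome clarification rather than a different route.

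The separation step, however, is where your write-up (and, to be fair, the paper's) is too quick, and as you state it the step is false. Disjoint compact sets need not lie in disjoint \emph{convex} polygons: if they did, their convex hulls would be disjoint, yet the convex hulls of two disjoint compact connected sets can overlap (a circular arc and a short segment sitting in its concavity). Dropping convexity repairs this in most configurations but not all: a polygon in this paper is by convention homeomorphic to the closed disk, and if one component of $\sigma$ is a closed curve (for instance $\mT$, which lies in $\PIC$) while another component sits in the bounded region it encloses, then any disk-homeomorphic polygon containing the first component must contain the second, since a simply connected set containing a loop contains every point about which that loop has nonzero winding number. In such a nested configuration no disjoint polygons of the kind demanded by Proposition~\ref{disj-poly} exist, so the reduction cannot be carried out as written; some further device is needed (polygonal regions with holes, or a preliminary homeomorphism of the plane that un-nests the components, in the spirit of the constructions in \cite{DL}). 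This gap is inherited from the paper rather than introduced by you, but the word ``convex'' makes your version fail even in un-nested examples, so at the very least that hypothesis should be dropped and the nested case acknowledged.
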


\begin{proof}
Suppose that $\sigma, \tau \in \UPIC$ are homeomorphic. Then they must have the same number of connected components, say $\sigma = \cup_{m=1}^M \sigma_m$ and $\tau = \cup_{m=1}^M \tau_m$. Furthermore, theses sets can be ordered so that, for each $m$,  $\sigma_m$ is homeomorphic to $\tau_m$,  and hence $AC(\sigma_m)$ is isomorphic to $AC(\tau_m)$.

Since these subsets are all compact, one can find disjoint polygons $P_1,\dots,P_M$ so that $\sigma_m$ lies in the interior of $P_m$. Hence by the last proposition
  \[ AC(\sigma) \simeq \bigoplus_{m=1}^M AC(\sigma_m) \simeq \bigoplus_{m=1}^M AC(\tau_m) \simeq AC(\tau).\]
\end{proof}

\begin{acknowledgements}
The work of the first author was financially supported
by the Ministry of Higher Education and Scientific Research
of Iraq.
\end{acknowledgements}

%
%

\end{document}